\newtheorem{Theorem}{Theorem}[section] % 1st argument is your name for it
\newtheorem{Lemma}[Theorem]{Lemma}     % 2nd argument is what is printed
\newtheorem{Corollary}[Theorem]{Corollary}
\newtheorem{Definition}[Theorem]{Definition}
\newtheorem{Example}[]{Example}
\newtheorem{Remark}[Theorem]{Remark}
\renewcommand{\tilde}{\widetilde}
\newcommand{\FB}{\mathfrak{B}}
\newcommand{\St}{\mathrm{St}}
\newcommand{\CT}{\mathcal{T}}
\newcommand{\MA}{\mathds{A}}
\newcommand{\MM}{\mathds{M}}
\newcommand{\MF}{\mathds{F}}
\newcommand{\MN}{\mathds{N}}
\newcommand{\MD}{\mathds{D}}
\newcommand{\MV}{\mathds{V}}
\newcommand{\MZ}{\mathds{Z}}
\newcommand{\MQ}{\mathds{Q}}
\newcommand{\MR}{\mathds{R}}
\newcommand{\MC}{\mathds{C}}
\newcommand{\CO}{\mathcal{O}}
\newcommand{\CV}{\mathcal{V}}
\newcommand{\CF}{\mathcal{F}}
\newcommand{\CD}{\mathcal{D}}
\newcommand{\CX}{\mathcal{X}}
\newcommand{\CE}{\mathcal{E}}
\newcommand{\CP}{\mathcal{P}}
\newcommand{\MG}{\mathds{G}}
\newcommand{\FR}{\mathfrak{R}}
\newcommand{\fp}{\mathfrak{p}}
\newcommand{\fq}{\mathfrak{q}}
\newcommand{\UG}{\underline{G}}
\newcommand{\tphi}{\widetilde{\phi}}
\newcommand{\GL}{\mathrm{GL}}
\newcommand{\SL}{\mathrm{SL}}
\newcommand{\PSL}{\mathrm{PSL}}
 \newcommand{\SO}{\mathrm{SO}}
\newcommand{\stab}{\operatorname{Stab}} \newcommand{\diag}{\operatorname{diag}}
\newcommand{\End}{\mathrm{End}}
\newcommand{\Rep}{\mathrm{Rep}}
\newcommand{\Nred}{\mathrm{N}_{\text{red}}}
\newcommand{\dist}{\mathrm{dist}}
\newcommand{\Cl}{\mathcal{C}\!\ell}
\title{Computing $S$-unit groups of orders} % This is the full title of the paper
 \author{Sebastian Sch\"onnenbeck\thanks{The author is supported by the DFG collaborative research center TRR 195} }
\affil{RWTH Aachen University\\
   Lehrstuhl D f\"ur Mathematik\\
   Pontdriesch 14/16, 52062 Aachen\\
   Germany\\
  \texttt{sebastian.schoennenbeck@rwth-aachen.de}
}
\begin{document}
\maketitle

\begin{abstract}
Based on the general strategy described by Borel and Serre and the Voronoi algorithm for computing unit groups of orders we present an algorithm for finding presentations of $S$-unit groups of orders. The algorithm is then used for some investigations concerning the congruence subgroup property.\end{abstract}

\section{Introduction}
Let $K$ be an algebraic number field with ring of integers $\CO_K$ and $S=\{\fp_1,...,\fp_s\}$ a finite set of maximal ideals of $\CO_K$. The ring of $S$-integers of $K$ is the set
\begin{equation}
 \CO_{K,S}:=\{x \in K~|~\nu_\fp(x) \geq 0 \text{ for all } \fp \notin S\},
\end{equation}
where $\nu_\fp$ denotes the discrete valuation at the ideal $\fp$. In other words, $\CO_{K,S}$ consists of all elements of $K$ that are locally integral at all primes not in $S$. By Dirichlet's unit theorem we know that its group of units $\CO_{K,S}^\times$ (also called the $S$-unit group of $K$) is - up to the group of roots of unity in $K$ - a free Abelian group of rank $a+s-1$ where $a$ is the number of pairwise inequivalent embeddings of $K$ into $\MC$.

The notion of $S$-units naturally generalizes to central simple algebras over $K$. Let $\MA$ be a finite dimensional, central simple $K$-algebra. Then by Wedderburn's theorem there is a skew field $\MD$ with center $K$ and some $n \in \MN$ such that $\MA \cong \MD^{n \times n}$.

For an $\CO_K$-order $\Lambda \subset \MA$ (i.e. a subring of $\MA$ that is a lattice over $\CO_K$) we set $\Lambda_S:=\Lambda \otimes_{\CO_K}\CO_{K,S}$ and call the group $\Lambda_S^\times$ the $S$-unit group of $\Lambda$. For groups of this type not much is known in general about their structure. In particular, there is no analogue of Dirichlet's unit theorem giving a uniform description. The aim of this article is to present a method for at least computing an explicit presentation (i.e. generators and defining relations) for groups of this type.

The groups in question are of interest for a variety of reasons: Firstly, the groups of type $\GL_n(\MD)$ are exactly the inner forms of $\GL_{nd}$ over $K$ where $d^2 = \dim_K(\MD)$ (see \cite[Prop. 2.17]{PlatonovRapinchuk}) and thus $S$-unit groups of orders are in some sense the most natural instances of $S$-arithmetic groups. 

%Despite this, there are only relatively few cases in which explicit presentations are known. These cases usually either involve algebras for which $\GL_n(\MD)$ has compact real points (hence $n=1$ and $d \leq 2$, see for instance \cite{Chinburgetal}) or only deal with the Chevalley group $\SL_n$ over $K$ (see for instance \cite{BehrExplizite}). In fact, to the best of the author's knowledge, the presentations given in this article are the first explicit presentations of $S$-arithmetic groups ($S \neq \emptyset$) in semi-simple algebraic groups with non-compact real points that are not of Chevalley type.

 Furthermore, $S$-unit groups naturally appear as the unit groups of $S$-integral group rings $\CO_{K,S}[G]$ (where $G$ is a finite group). These groups play for instance a prominent role in the Zassenhaus conjecture (see \cite{KimmerleJahresbericht}) which states that any unit of finite order in $\MZ [G]$ is $\MQ[G]$-conjugate to $\pm g$ for some $g \in G$.
% the components of the rational group ring of a finite group $G$ are of the form $\MD^{n \times n}$ so in particular the units in the $S$-integral group ring $\CO_{K,S}[G]$ are (up to considering a semi-simple instead of a simple $\MQ$-algebra) of the form $\Lambda_S^\times$. Hence being able to compute an explicit presentation of these groups is of interest for the investigation of the rational and ($S$-)integral representation theory of $G$. 

In addition, there are some open number theoretic conjectures concerning the groups of type $\Lambda_S^\times$. For a two-sided ideal $I \triangleleft \Lambda_S$ denote the group of all elements of $\Lambda_S^\times$ that are congruent to the identity modulo $I$ by $\Lambda_S^\times(I)$ and call this group the principal congruence subgroup of level $I$. Then $\Lambda_S^\times$ is said to have the congruence property if every finite index subgroup contains a principal congruence subgroup.
 In general the question whether $\Lambda_S^\times$ has the congruence property is widely open, however, it is conjectured (see \cite{PrasadRapinchuk}) that this is the case if the $S$-rank of $\GL_n(\MD)$ is at least $2$ and the $\fp$-rank of $\GL_n(\MD)$ is at least $1$ for all $\fp \in S$. 

As already stated, we want to describe an algorithm for computing an explicit presentation of $\Lambda_S^\times$. If $S=\emptyset$ there exists a general method for computing such presentations (see \cite{Brauncomputing}). On the other hand for $S \neq \emptyset$ explicit presentations are known only in a very limited number of cases. These cases either only deal with the Chevalley group $\SL_n$ over $K$ where $\CO_K$ is a principal ideal domain (see for instance \cite{BehrExplizite}) or only work for definite quaternion algebras over $\MQ$ (see \cite{Chinburgetal}) in which case the usual unit groups of orders are finite. The method we present here in principle works in the completely general setting. However, it is subject to rather strong constraints concerning computational power and memory. Due to these constraints we mainly focus our computational efforts (see Section \ref{ComputationalResults}) on the case $n=1$, i.e. computing $S$-unit groups of maximal orders in division algebras. To the best of the author's knowledge, the presentations given in this article are the first explicit presentations of $S$-arithmetic groups ($S \neq \emptyset$) in semi-simple algebraic groups with non-compact real points that are not of Chevalley type.

%In this article (see section \ref{CSP}) we provide some numerical evidence for this conjecture by computing an explicit presentation in certain examples and checking the composition factors of small quotients for any inconsistencies. 

Our strategy for computing the presentation is as follows.
If $S=\emptyset$ the $S$-unit group of an order $\Lambda$ is simply its unit group $\Lambda^\times$ and in \cite{Brauncomputing} we already described an algorithm computing a presentation for this group. The algorithm employs the action of $\Lambda^\times$ on a certain cone of positive quadratic forms and, more importantly, also provides a framework to write arbitrary elements of $\Lambda^\times$ as a word in the computed generators.

For general $S$-arithmetic groups Borel and Serre (see \cite{BorelSerreCohomologie}) defined an action on a contractible CW-complex of the form
\begin{equation}
 \CX \times \prod_{i=1}^s \CX_{\fp_i}
\end{equation}
where $\CX$ is often called the Borel-Serre space (see \cite{RonanBuildings}) and $\CX_{\fp_i}$ is the Bruhat-Tits building of the corresponding algebraic group at the prime $\fp_i$. This action is then classically used to conclude (see \cite{BehrEndlich}) that $S$-arithmetic groups are finitely presented. 
For our approach we still employ this action, however, we do not consider the action on the whole complex but rather iteratively add one prime of $S$ after the other in an effort to only deal with a single component at a time. Moreover, in the first step, where we are dealing with an arithmetic group, we use the above-mentioned action on the cone of positive forms instead of the action on $\CX$. 

%Moreover, the factor $\CX$ can in our situation be replaced by the cone of positive forms mentioned above.

We aim to make the action on the Bruhat-Tits buildings as precise as possible in our special situation. Combining this action with the algorithm described above then gives rise to a practical method for computing generators and relations for $S$-unit groups of orders. 

%While the method is subject to (rather strong) constraints concerning computational power and memory it can still handle a variety of examples that are not covered by existing methods (see section \ref{ComputationalResults}). Due to these constraints we also focus our attention on the case $n=1$, i.e. computing $S$-unit groups of maximal orders in division algebras.

The structure of the article is as follows. We start by briefly repeating the necessary details from Bass-Serre theory for computing presentations of groups acting on simply-connected complexes as well as the construction of a combinatorial model of the Bruhat-Tits building of the special linear group of a division algebra over a local field. Afterwards we make use of these preliminaries and describe the general strategy for computing a presentation for $S$-unit groups of orders and give some insight on how to solve the arising tasks in certain special cases.
In the last section we compute some examples where the algebra in question is a division algebra with center $\MQ$ or an imaginary quadratic field of class number $1$. We use the results for some investigations regarding the congruence subgroup property.
\section{Bass-Serre theory}
The method we want to employ for computing a presentation is based on the action of the group on a simply-connected CW-complex. The approach is generally known under the name Bass-Serre theory (see \cite{SerreArbres,BassCoveringTheory}). In our situation one has to be a little careful since the groups in question do not necessarily act orientation preservingly on the $1$-skeleton of the given CW-complex. To resolve this problem we follow the exposition in Brown's article (\cite{BrownPresentations}).

Let $G$ be a group and $X$ a simply-connected $G$-CW-complex, i.e. a simply-connected CW-complex on which $G$ acts by permuting cells.
Denote by $\CV$ the set of vertices of $X$ (cells of dimension $0$) and by $\CE$ the set of edges of $X$ (cells of dimension $1$). We fix an orientation for each $e \in \CE$, i.e. $e$ comes with two vertices $o(e)$ and $t(e)$ in $\CV$, the origin and target of $e$, respectively. Note that we can think of $(\CV,\CE)$ as a (directed) graph. For $e \in \CE$ we set $\overline{e}$ the same edge with reversed orientation (i.e. $o(\overline{e})=t(e)$ and $t(\overline{e})=o(e)$). We say that the orientation of $e$ is reversed by the action of $G$, if there exists $g \in G$ such that $e g =\overline{e}$; otherwise we say the orientation of $e$ is preserved by the action. Let us assume that we have chosen the orientation for the cells in $\CE$ in a way that $o(eg)=o(e)g$ and $t(eg)=t(e)g$ for all $g \in G$ whenever the orientation of $e$ is preserved by the action of $G$. Obviously this is always possible by fixing an orientation for a set of representatives of $\CE/G$ and then extending this by use of the $G$-action.

We decompose $\CE=\CE^+ \sqcup \CE^-$ where $\CE^+$ is precisely the set of edges whose orientation is preserved under the action of $G$. We say the edges are of plus- or minus-type, respectively. By 
\begin{equation}
  G_e=G_{o(e)} \cap G_{t(e)}                                                                                                                                                                                                                                                                                                        \end{equation}
 we denote the stabilizer of $e$ together with its orientation and by $G_{e,\overline{e}}=G_{\{o(e),t(e)\}}$ the stabilizer of $e$ ignoring the orientation. Clearly we have $[G_{e,\overline{e}}:G_{e}]$ equal to $1$ or $2$ according to $e \in \CE^+$ or $e \in \CE^-$.

We fix a tree $\CT$ in our graph such that the vertices $\CV_\CT$ of $\CT$ form a system of representatives of $\CV/G$. In particular, each edge of $\CT$ is in $\CE^+$. Choose systems $E^+,E^-$ of representatives of $\CE^+/G$ and $\CE^-/G$ such that $o(e) \in \CV_\CT$ for all $e\in E^+ \cup E^-$. For any $e \in E^+$ we choose an element $g_e \in G$ such that $t(e)g_e^{-1} \in \CV_\CT$ (with the convention $g_e=1$ if $t(e) \in \CV_\CT$). For any $e \in E^-$ we choose $g_e \in G_{e,\overline{e}}-G_{e}$.

Finally, we choose a system of representatives $\CF$ of the dimension $2$-cells of $X$ modulo the action of $G$  and fix for any $f \in \CF$ a sequence $(e_1,...,e_m)$ of edges with the following properties:
\begin{itemize}
 \item The $1$-cells in the boundary of $f$ are exactly $\{e_1,...,e_m\}$.
 \item $o(e_1) \in \CV_\CT$.
 \item $o(e_{i+1})=t(e_i)$ for $1 \leq i \leq m-1$ and $t(e_m)=o(e_1)$.
 \item $e_{i+1} \neq \overline{e_i}$ for all $ 1 \leq i \leq m-1$ and $e_1 \neq \overline{e_m}$.
\end{itemize}

To each $e_i,1 \leq i \leq m,$ we will non-canonically assign an element $g_i$, thought of as a word in the various $g_e$ and elements of the stabilizers $G_{e,\bar{e}}$ and $G_v$. The precise way to find these elements is described in \cite[Sect. 1]{BrownPresentations}. Here we merely note that we have $t(e_i)g_ig_{i-1}...g_1 \in \CV_\CT$ for all $1 \leq i \leq m$. Thus, in particular, $g_m...g_1 \in G_{o(e_1)}$. We call $(g_1,...,g_m)$ the cycle associated to $f$.

\begin{Theorem}[\protect{\cite[Thm. 1]{BrownPresentations}}]\label{Presentation}
 The group $G$ is generated by the $G_v,~v \in \CV_\CT,$ and the elements $g_e,~e \in E^+ \cup E^-,$ subject to the following relations:
 \begin{enumerate}
  \item The multiplication table of the groups $G_v,~v \in \CV_\CT$.
  \item $g_e=1$ if $e$ is an edge of $\CV_\CT$.
  \item $g_e \cdot g \cdot g_e^{-1} \in G_{t(e)g_e^{-1}}$ for all $e \in E^+$ and $g \in G_{e} \subset G_{o(e)}$.
  \item $g_e\cdot g \cdot g_e \in G_{o(e)}$ for all $e \in E^-$ and $g \in G_{e} \subset G_{o(e)}$.
  \item $g_m \cdot ...\cdot g_1 \in G_{o(e_1)}$ for any cycle $(g_1,...,g_m)$ associated to an element of $\CF$ as above.
 \end{enumerate}
\end{Theorem}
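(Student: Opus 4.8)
The plan is to prove that the group $\hat G$ presented abstractly by the generators and relations in the statement is isomorphic to $G$. Every one of the relations (1)--(5) holds by construction in $G$: for (1) this is trivial, for (2)--(4) it is built into the choices of the $g_e$, and (5) holds because the $g_i$ of each cycle were chosen precisely so that $g_m\cdots g_1 \in G_{o(e_1)}$. Hence sending each generator to the corresponding element of $G$ defines a homomorphism $\phi \colon \hat G \to G$, and it remains to show that $\phi$ is bijective.

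Surjectivity is the easy half and uses only connectedness of $X$. Fix the base vertex $v_0 \in \CV_\CT$. Given $g \in G$, join $v_0$ to $v_0 g$ by an edge path in $X^{(1)}$; translating the edges of this path successively into $\CV_\CT$-position --- exactly the bookkeeping with the elements $g_i$ recalled before the theorem --- expresses $g$, modulo $G_{v_0}$, as a word in the $g_e$ and in elements of the vertex stabilisers. So $g$ lies in the image of $\phi$.

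Injectivity is the substantial half, and this is where simple-connectedness of $X$ enters. Let $w$ be a word in the generators with $\phi(w) = 1$. Applying the letters of $w$ one at a time to $v_0$ traces an edge path in $X^{(1)}$ which, since $\phi(w)=1$, closes up to a loop at $v_0$. Because $X$ is simply connected, this loop can be reduced to the constant loop by a finite sequence of elementary moves: insertion or deletion of a backtrack $e\,\overline e$, and insertion or deletion of the boundary cycle of a $2$-cell (up to conjugation, which only amounts to a harmless change of base point). Each such move translates back, via the same bookkeeping as above, into an application of a defining relation of $\hat G$: a backtrack along a plus-type edge uses relation (2) or (3), a backtrack along a minus-type edge uses relation (4), the boundary of a $2$-cell uses relation (5) with the $g_i$ of its associated cycle, and re-bracketing the word within a single vertex orbit uses the multiplication tables (1). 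Following the reduction through shows $w = 1$ in $\hat G$. Equivalently, one may build from the presentation data a connected $\hat G$-CW-complex $\hat X$ --- one vertex orbit $\hat G/G_v$ per $v \in \CV_\CT$, one edge orbit per $e \in E^+\cup E^-$ glued using $g_e$ and the inclusions of $G_e$, one $2$-cell orbit per $f \in \CF$ attached along its cycle --- observe that $\phi$ is covered by a $\hat G$-equivariant cellular map $\hat X \to X$ which is an isomorphism on quotients and a covering map, and conclude that since $X$ is simply connected and $\hat X$ connected this covering, hence $\phi$, is an isomorphism.

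I expect the delicate point to be the orientation-reversing edges. The classical Bass--Serre theory assumes the action has no inversions, and here one must check carefully that an edge $e$ with $[G_{e,\overline e}:G_e]=2$ is faithfully encoded by the \emph{flip} relation $g_e\,g\,g_e \in G_{o(e)}$ of (4) rather than by a conjugation relation; that in $\hat X$ the minus-type edges are attached so that the $\hat G$-action genuinely reverses their orientation, matching the action on $X$; and that the cycle associated to each $2$-cell is compatible with the chosen edge orientations, so that relation (5) really transcribes the $2$-cell boundaries. Everything else is the routine translation between paths in $X$ and words in the generators that is set up in the paragraph before the statement; for the complete argument see \cite[Thm.~1]{BrownPresentations}.
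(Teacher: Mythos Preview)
The paper does not prove this theorem at all: it is stated with attribution to \cite[Thm.~1]{BrownPresentations} and no argument is given. Your sketch is a faithful outline of Brown's original proof --- the surjectivity via edge paths in $X^{(1)}$, and the injectivity either by reducing loops using simple connectedness or, equivalently, by constructing the complex $\hat X$ and recognising the covering map --- so there is nothing to compare against in the present paper, and your proposal correctly points back to the source for the details.
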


\section{Bruhat-Tits buildings}\label{Bruhat-Tits}
Let $F$ be a non-Archimedean local field of characteristic zero, $D$ an $F$-division algebra and $n \in \MN$. We quickly want to recall the construction of a combinatorial model for the affine Bruhat-Tits building $\FB$ of the reductive group $\SL_n(D)$ of norm-$1$-units in $D^{n \times n}$. A full account of this construction can be found in the book \cite{Garrett} and we note that similar constructions exist in the case of other classical groups (see \cite{AbramenkoNebe}) as well as for some exceptional groups (see \cite{GanYu1,GanYu2}).

We consider the simple right $D^{n \times n}$-module $V=D^{1 \times n}$ which we can also see as a left $D$-module. Let $\CO \subset D$ be the maximal order in $D$ and denote by $\pi \in \CO$ a generator of its maximal ideal. Then the vertices of $\FB$ are in natural bijection with the homothety classes
\begin{equation}
 [L]=\{\pi^k L~|~k \in \MZ\}
\end{equation}
 of left $\CO$-lattices in $V$. Moreover, a collection of vertices, say $[L_i],1 \leq i \leq s$, forms a simplex if and only if the union of the corresponding homothety classes, $\bigcup_{i=1}^s [L_i]$, forms a chain, i.e. is totally ordered by inclusion.

 Since the lattices between $\pi L$ and $L$ are in bijection with the subspaces of  $L/\pi L$, clearly a maximal simplex consists of exactly $n$ vertices (i.e. has dimension $n-1$). Furthermore we can construct such a maximal simplex $\{[L_i]~|~ 0 \leq i \leq n-1\}$ by choosing a basis $(b_1,...,b_n)$ of $V$ and defining
 \begin{equation}
  L_i=\bigoplus_{j=1}^{i} \CO \pi b_j \oplus \bigoplus_{j=i+1}^{n} \CO b_j \text{ for } 0 \leq i \leq n-1.
 \end{equation}

We say two (distinct) vertices, say $[L]$ and $[M]$, of $\FB$ are neighbours (or adjacent) if there is a simplex containing them both. Clearly this is the case if and only if $[L] \cup [M]$ forms a chain which gives rise to the following well-known lemma.

\begin{Lemma}
 Let $L$ be an $\CO$-lattice in $V$, then the neighbours of $[L]$ in $\FB$ are in natural bijection with the proper subspaces of $L/\pi L$.
\end{Lemma}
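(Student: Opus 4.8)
The plan is to set up a bijection between neighbours of $[L]$ and proper subspaces of $L/\pi L$ in the obvious way and then check it is well-defined and inverse to an explicit back-map. Recall that $[L]$ and $[M]$ are neighbours exactly when $[L] \cup [M]$ forms a chain, i.e.\ after choosing representatives we may assume (possibly interchanging roles and rescaling) that $\pi L \subsetneq M \subsetneq L$; the chain condition forces $M$ to be strictly between two consecutive members of the homothety class of $L$, and scaling reduces to this one case. So the map I would write down is $[M] \mapsto M/\pi L \subset L/\pi L$, a subspace that is proper (since $M \subsetneq L$) and nonzero (since $M \supsetneq \pi L$).

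\emph{Key steps, in order.} First I would fix the representative: given a neighbour $[M]$, use the chain property to choose the representative $M$ with $\pi L \subsetneq M \subsetneq L$, and note this $M$ is unique with this property, so the assignment $[M] \mapsto M/\pi L$ is well-defined. Second, I would observe $M/\pi L$ is an $\CO/\pi\CO$-submodule of $L/\pi L$; since $\CO/\pi\CO$ is the residue division ring of $D$, ``submodule'' here means exactly ``subspace'' over that (skew) field, which is what the statement intends. Third, for the inverse: given a proper nonzero subspace $W \subset L/\pi L$, let $M$ be its preimage under $L \twoheadrightarrow L/\pi L$; then $\pi L \subseteq M \subseteq L$ with both inclusions proper, so $[L] \cup [M]$ is a chain and $[M]$ is a neighbour of $[L]$. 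Fourth, I would check the two constructions are mutually inverse — immediate from the correspondence theorem for the quotient $L \to L/\pi L$ — and that the bijection is natural, i.e.\ compatible with the $\Aut$-action / independent of the choice of representative $L$ of its homothety class (replacing $L$ by $\pi^k L$ just relabels). Finally, one should note the zero subspace is excluded because it corresponds to $M = \pi L$, whose class is $[L]$ itself, not a distinct neighbour; and $W = L/\pi L$ is excluded because it gives $M = L$, again $[L]$ itself. Hence exactly the \emph{proper} (and implicitly nonzero, but $0$ is a proper subspace so only $W = L/\pi L$ need be removed) subspaces survive.

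\emph{Subtlety to watch.} The one point that needs a word of care is the asymmetry in the phrase ``proper subspaces'': a priori a neighbour $[M]$ might instead satisfy $\pi M \subsetneq L \subsetneq M$, i.e.\ the inclusion could go the other way. But rescaling $M$ by the appropriate power of $\pi$ brings us back to $\pi L \subsetneq M \subsetneq L$, and the homothety class $[M]$ is unchanged; the only genuine constraint from the chain condition is that consecutive lattices in a chain differ by ``less than a factor of $\pi$'', which is exactly the sandwiching between $\pi L$ and $L$. So there is really only one case up to the equivalence, and the zero subspace is the unique value one must discard beyond properness. The main (mild) obstacle is thus bookkeeping of representatives rather than anything deep — the result is, as the text says, well-known, and the proof is essentially the correspondence theorem applied to $L/\pi L$.
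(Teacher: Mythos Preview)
Your argument is correct and is exactly the standard one; the paper in fact gives no proof at all, merely calling the lemma ``well-known'' after observing that $[L]$ and $[M]$ are neighbours iff $[L]\cup[M]$ forms a chain. So there is nothing to compare against: you have supplied precisely the proof the paper omits, via the correspondence theorem for $L \twoheadrightarrow L/\pi L$ and the observation that a neighbour has a unique representative sandwiched as $\pi L \subsetneq M \subsetneq L$.

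One small remark on wording: your last parenthetical about ``proper'' versus ``nonzero'' is a little tangled. You correctly noted earlier that both $W=0$ and $W=L/\pi L$ must be excluded (each yields $[M]=[L]$), so the bijection is with the \emph{nonzero proper} subspaces. The paper's phrase ``proper subspaces'' is indeed slightly loose on this point; your proof makes the correct statement clear, so just say ``nonzero proper'' and drop the hedging.
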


Finally, we note that the action of $\GL_n(D)$ on $\FB$ is just given by the usual action of $\GL_n(D)$ on the set of $\CO$-lattices in $V$.

\section{The general situation}\label{General Situation}
We now want to describe in some detail our general strategy for computing presentations of $S$-unit groups. In this section we try to describe the situation in as much generality as possible.

The key idea is that we will build the presentation iteratively, adding one prime of the set $S$ after the other. To that end we will assume that we can already handle $S$-arithmetic groups for a fixed set $S$ and are now working on $S'$-arithmetic groups, where we got $S'$ by adding one more prime to $S$.  This allows us to only deal with a single prime and thus in particular with a single Bruhat-Tits building at a time. 

Let $K$ be an algebraic number field with ring of integers $\CO_K$, $\MD$ a central $K$-division algebra and $n \in \MN$. We denote by $\MA$ the central simple $K$-algebra $\MA=\MD^{n \times n}=\mathrm{M}_n(\MD)$ and by $\MV \cong \MD^{1 \times n}$ its simple right module.

Let $\Lambda \subset \MA$ be a maximal $\CO_K$-order in $\MA$ and $\Delta \subset \MD$ a maximal $\CO_K$-order in $\MD$. Then there is a $\Delta$-left-lattice $L_0$ in $\MV$ such that $\Lambda=\End_{\Delta}(L_0)$ (cf. \cite[Cor. 27.6]{Reiner}).

Furthermore let $S$ be a finite set of prime ideals of $\CO_K$, $\fp \notin S$ another prime ideal and $S':=S \sqcup \{\fp\}$. We are interested in finding a presentation for the unit group $\Lambda_{S'}^\times =  (\Lambda \otimes_{\CO_K} \CO_{K,S'})^\times$ where $\CO_{K,S'}$ is the ring of $S'$-integers of $K$. 

We denote the completion of $K$ at $\fp$ by $K_\fp$ then there is $m \in \MN$ and a central $K_\fp$-division algebra $D_\fp$ such that $\MD_\fp=\MD \otimes_K K_\fp \cong D_\fp^{m \times m}$ and consequently $\MA_\fp = \MA \otimes_K K_\fp \cong D_\fp^{nm \times nm}$. Let $\CO_\fp$ be the maximal order in $D_\fp$ with prime element $\pi$. Since $\CO_\fp$ is a principal ideal domain we can without loss of generality assume that under the above isomorphism $\Delta_\fp:=\Delta \otimes_{\CO_K}\CO_{K_\fp}$ maps to $\CO_\fp^{m \times m}$ and we set 
\begin{equation}
 \epsilon:=\diag(\underbrace{1,0,...,0}_m) \in \Delta_\fp
\end{equation}
 (using this identification). Using this idempotent we then set $V_\fp:=\epsilon \MV_\fp \cong D_\fp^{1 \times nm}$ the simple $\MA_\fp$-right-module.

Let us now denote the Bruhat-Tits building of $\SL_{mn}(D_\fp)$ by $\FB$. In particular, following Section \ref{Bruhat-Tits}, the vertices of $\FB$ are given by
\begin{equation}
 \{[M] ~|~ M \subset V_\fp~\CO_\fp-\text{left-lattice} \}
\end{equation}
where $[M]$ denotes the homothety class of $M$, i.e.
\begin{equation}
 [M]=\{\pi^i M ~|~ i \in \MZ \}.
\end{equation}
\begin{Remark}
 There is a natural embedding $A^\times \hookrightarrow A_\fp^\times$ so $A^\times$ (and thus in particular $\Lambda_{S'}^\times$) acts on the set of $\CO_\fp$-lattices in $V_\fp$ and thus on the vertices of $\FB$ by right-multiplication.
\end{Remark}
The following theorem illustrates a well-known but very important principle.
\begin{Theorem}
 Let 
\begin{equation}
 \MM(L_0,S'):=\{M \subset \MV~|~ M~ \Delta_S\text{-left-lattice, }M\otimes \CO_{K_\fq} = L_0 \otimes \CO_{K_\fq} ~\forall~ \fq \notin S' \}.
\end{equation}
The map
\begin{equation}
 \begin{split}
  \phi:&\MM(L_0,S') \rightarrow \{M \subset V_\fp~|~ M ~\CO_\fp\text{-left-lattice} \}\\
   & M \mapsto \epsilon (M \otimes_{\CO_{K,S}} \CO_{K_\fp})
 \end{split}
\end{equation}
is an equivalence of $\Lambda_{S'}^\times$-sets, i.e. it is a bijection compatible with the respective actions of $\Lambda_{S'}^\times$ on the left- and right-hand-side.
\end{Theorem}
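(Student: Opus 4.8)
The plan is to factor $\phi$ as a composite of two bijections---``complete at $\fp$'' followed by ``apply the idempotent $\epsilon$''---and then to verify the $\Lambda_{S'}^\times$-equivariance, which is essentially formal because $\Lambda_{S'}^\times$ acts on $\MV$ on the right while $\epsilon$ and all scalar extensions act on the left, so they commute.

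First I would rephrase the definition of $\MM(L_0,S')$: writing $\tilde{L}_0:=\Delta_S L_0$ for the $\Delta_S$-lattice in $\MV$ spanned by $L_0$, an element $M\in\MM(L_0,S')$ is exactly a (full) $\Delta_S$-lattice in $\MV$ whose $\fq$-adic completion equals $\tilde{L}_0\otimes_{\CO_{K,S}}\CO_{K_\fq}=L_0\otimes_{\CO_K}\CO_{K_\fq}$ for every prime $\fq$ of $\CO_{K,S}$ with $\fq\neq\fp$. By the local--global description of lattices over the Dedekind domain $\CO_{K,S}$ (cf.\ \cite[\S5]{Reiner}), a $\Delta_S$-lattice in $\MV$ is recovered from the family of its completions, and any family that agrees with $\tilde L_0$ at almost all primes occurs; hence the only datum left free for $M\in\MM(L_0,S')$ is its completion at $\fp$, which may be prescribed to be an arbitrary $\Delta_\fp$-lattice in $\MV_\fp$ (where $\Delta_\fp=\Delta\otimes_{\CO_K}\CO_{K_\fp}$). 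Thus $M\mapsto M\otimes_{\CO_{K,S}}\CO_{K_\fp}$ is a bijection from $\MM(L_0,S')$ onto the set of $\Delta_\fp$-lattices in $\MV_\fp$.

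Next I would pass from $\Delta_\fp$ to $\CO_\fp$. Under the fixed identification $\Delta_\fp\cong\CO_\fp^{m\times m}$ the idempotent $\epsilon$ is the matrix unit $E_{11}$, and it is \emph{full}: because $E_{i1}\epsilon E_{1j}=E_{ij}$ and the $E_{ij}$ generate $\CO_\fp^{m\times m}$ over $\CO_\fp$, one has $\Delta_\fp\epsilon\Delta_\fp=\Delta_\fp$, while $\epsilon\Delta_\fp\epsilon\cong\CO_\fp$. By Morita theory for orders this makes $M\mapsto\epsilon M$ a bijection from the $\Delta_\fp$-lattices in $\MV_\fp$ onto the $\CO_\fp$-lattices in $V_\fp=\epsilon\MV_\fp$, with inverse $N\mapsto\Delta_\fp\epsilon N$; concretely, $\epsilon M$ is a full $\CO_\fp$-lattice in $V_\fp$ whenever $M$ is a full $\Delta_\fp$-lattice in $\MV_\fp$, and the identities $\epsilon(\Delta_\fp\epsilon N)=(\epsilon\Delta_\fp\epsilon)N=N$ and $\Delta_\fp\epsilon(\epsilon M)=(\Delta_\fp\epsilon\Delta_\fp)M=M$ hold because $N$ is $\CO_\fp$-stable and $M$ is $\Delta_\fp$-stable. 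Composing the two bijections yields precisely $M\mapsto\epsilon\bigl(M\otimes_{\CO_{K,S}}\CO_{K_\fp}\bigr)=\phi(M)$, so $\phi$ is a bijection.

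Finally, I would check compatibility with the actions. The set $\MM(L_0,S')$ is $\Lambda_{S'}^\times$-stable: for $u\in\Lambda_{S'}^\times\subset\MA^\times$, right multiplication $x\mapsto xu$ is a $\Delta_S$-linear automorphism of $\MV$ carrying full lattices to full lattices, and for every prime $\fq$ with $\fq\neq\fp$, $\fq\notin S$, we have $\CO_{K,S'}\subseteq\CO_{K_\fq}$, so $u$ is a unit of $\Lambda\otimes_{\CO_K}\CO_{K_\fq}=\End_{\Delta\otimes\CO_{K_\fq}}(L_0\otimes\CO_{K_\fq})$ and therefore stabilizes $L_0\otimes_{\CO_K}\CO_{K_\fq}=M\otimes_{\CO_{K,S}}\CO_{K_\fq}$ inside $\MV_\fq$; hence $(Mu)\otimes_{\CO_{K,S}}\CO_{K_\fq}=L_0\otimes_{\CO_K}\CO_{K_\fq}$ and $Mu\in\MM(L_0,S')$. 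Since right multiplication by $u$ commutes with $-\otimes_{\CO_{K,S}}\CO_{K_\fp}$ and with left multiplication by $\epsilon$,
\[
 \phi(Mu)=\epsilon\bigl((M\otimes_{\CO_{K,S}}\CO_{K_\fp})\,u\bigr)=\bigl(\epsilon(M\otimes_{\CO_{K,S}}\CO_{K_\fp})\bigr)\,u=\phi(M)\,u .
\]
The only genuinely non-formal ingredient is the local--global principle of the first step, together with the observation that fixing the completions at all primes except $\fp$ leaves exactly one free parameter; I expect that step---and in particular keeping straight the distinction between localization at $\fp$ and $\fp$-adic completion---to be the main point to get right, everything else being bookkeeping with the Morita correspondence and with the commutation of the left and right actions.
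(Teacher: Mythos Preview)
Your proof is correct and uses the same two ingredients as the paper---the local--global principle for $\Delta_S$-lattices and the Morita correspondence between $\Delta_\fp\cong\CO_\fp^{m\times m}$ and $\CO_\fp$---but organises them differently. You factor $\phi$ as a composite of two maps and prove each is a bijection separately, invoking Morita theory as a package for the second. The paper instead treats $\phi$ as a single map and proves surjectivity and injectivity directly: for surjectivity it builds the Morita inverse by hand, setting $\widehat{M}=\sum_i g_i^{-1}M$ with $g_i\in\Delta_\fp^\times$ conjugating $\epsilon$ to the other diagonal idempotents $\epsilon_i$, and then applies the local--global principle; for injectivity it shows that agreement of the $\epsilon$-components forces agreement of all $\epsilon_i$-components and hence of the full $\fp$-adic completions. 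Your decomposition is cleaner conceptually and makes the structure transparent; the paper's version is more self-contained in that it never names Morita equivalence, only manipulates idempotents explicitly. The equivariance check is handled the same way in both (declared obvious in the paper, spelled out by you).
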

\begin{proof}
 Since $\Delta_S \otimes_{\CO_{K,S}} \CO_{K_\fp}\cong \CO_\fp^{m \times m}$ we can write $1_{\Delta_S}$ as a sum of orthogonal primitive idempotents, 
\begin{equation}
1_{\Delta_S}=\epsilon_1+...+\epsilon_m \in \Delta_S \otimes_{\CO_{K,S}} \CO_{K_\fp}\cong \CO_\fp^{m \times m}
\end{equation}
with $\epsilon_1=\epsilon$ and moreover $\epsilon_i=g_i^{-1}\epsilon g_i$ for some $g_i \in (\Delta_S \otimes_{\CO_{K,S}} \CO_{K_\fp})^{\times}$ and $2 \leq i \leq m$. Let us also set $g_1 = 1$ for the sake of uniformity.

 We will first show that $\phi$ is surjective. 
 Let $M \subset V_\fp = \epsilon \MV_\fp$ be an arbitrary $\CO_\fp$-lattice. Then $M_i:=g_i^{-1} M$ (where we actually think of $M$ as a subset of $\epsilon \MV_\fp$) is a full lattice in $\epsilon_i\MV_\fp$ and thus $\widehat{M}=M_1+...+M_m$ is a full lattice in $\MV_\fp$ with $\epsilon_1 \widehat{M} =M$. By the local-global principle for $\Delta_S$-lattices there now exists a $\Delta_S$-lattice $N \subset \MV$ such that $N \otimes \CO_{K_\fq} = L_0 \otimes \CO_{K_\fq}$ for $\fq \notin S'$ and $N \otimes \CO_{K_\fp} = \widehat{M} \otimes \CO_{K_\fp}$ which means that $N$ is a preimage of $M$ under $\phi$.

 Now let $L,M \in \MM(L_0,S')$. Then both $L \otimes_{\CO_{K,S}} \CO_{K_\fp}$ and $M \otimes_{\CO_{K,S}} \CO_{K_\fp}$ are $\Delta_S \otimes_{\CO_{K,S}} \CO_{K_\fp}$-lattices by construction.  We compute
\begin{equation}
 \begin{split}
  \epsilon_i (L \otimes_{\CO_{K,S}}\CO_{K_\fp}) &= g_i^{-1}\epsilon g_i (L \otimes_{\CO_{K,S}}\CO_{K_\fp}) \\
&=g_i^{-1}\epsilon(L \otimes_{\CO_{K,S}}\CO_{K_\fp}) \\
&=g_i^{-1} \phi(L) \\
&=g_i^{-1} \phi(M) \\
&=g_i^{-1} \epsilon g_i (M \otimes_{\CO_{K,S}}\CO_{K_\fp})
 \end{split}
\end{equation}
and thus
\begin{equation}
\begin{split}
 L \otimes_{\CO_{K,S}}\CO_{K_\fp}&=1_{\Delta_S}(L \otimes_{\CO_{K,S}}\CO_{K_\fp}) \\
&=(\epsilon_1+...+\epsilon_m)(L \otimes_{\CO_{K,S}}\CO_{K_\fp})\\
&=(\epsilon_1+...+\epsilon_m)(M \otimes_{\CO_{K,S}}\CO_{K_\fp})\\
&=M \otimes_{\CO_{K,S}}\CO_{K_\fp}.
\end{split}
\end{equation}
Since we also have $L\otimes_{\CO_{K,S}}\CO_{K_\fq}=M\otimes_{\CO_{K,S}}\CO_{K_\fq}$ for all $\fq \notin S'$, we hence obtain $L\otimes_{\CO_{K,S}}\CO_{K_\fq}=M\otimes_{\CO_{K,S}}\CO_{K_\fq}$ for all $\fq \notin S$, whence $L=M$ by the local-global principle for $\CO_{K,S}$-lattices. Thus $\phi$ is injective as well.

The compatibility with the $\Lambda_{S'}^\times$-action is obvious.
\end{proof}

\begin{Corollary}\label{LatticeStabilizer}
 Let $L \in \MM(L_0,S')$. Then
\begin{equation}
\stab_{\Lambda_S'^{\times}}(\phi(L))=(\End_{\Delta_S}(L))^\times = \{g \in \MA~|~ Lg=L\}.
\end{equation}
\end{Corollary}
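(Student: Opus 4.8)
The plan is to split the claimed chain of equalities into two essentially independent pieces: the identification of $\stab_{\Lambda_{S'}^\times}(\phi(L))$ with $\{g \in \Lambda_{S'}^\times \mid Lg = L\}$, which will be a purely formal consequence of the previous Theorem, and the stronger assertion that imposing $Lg = L$ already forces $g \in \Lambda_{S'}^\times$, so that $\{g \in \Lambda_{S'}^\times \mid Lg = L\} = \{g \in \MA \mid Lg = L\} = (\End_{\Delta_S}(L))^\times$.

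For the first piece I would start by recording that $\MM(L_0,S')$ is stable under the right action of $\Lambda_{S'}^\times$ (this is already implicit in the Theorem, whose statement presupposes that $\phi$ is a map of $\Lambda_{S'}^\times$-sets): for $g \in \Lambda_{S'}^\times$ its image in $\Lambda_\fq^\times = \End_{\Delta_\fq}(L_0 \otimes \CO_{K_\fq})^\times$ fixes $L_0 \otimes \CO_{K_\fq}$ for every $\fq \notin S'$, so $Lg$ is again a $\Delta_S$-lattice in $\MV$ with the same completions as $L$ away from $S'$. Since $\phi$ is bijective and $\Lambda_{S'}^\times$-equivariant, for $g \in \Lambda_{S'}^\times$ we get $\phi(L)g = \phi(Lg)$, and $\phi(L)g = \phi(L)$ is then equivalent to $\phi(Lg) = \phi(L)$, hence to $Lg = L$ by injectivity of $\phi$; this gives $\stab_{\Lambda_{S'}^\times}(\phi(L)) = \{g \in \Lambda_{S'}^\times \mid Lg = L\}$. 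The identification $\{g \in \MA \mid Lg = L\} = (\End_{\Delta_S}(L))^\times$ is then routine: the right action of $\MA$ on $\MV$ realizes the full ring of left-$\Delta_S$-linear (equivalently left-$\MD$-linear) endomorphisms of $\MV$, so every $\Delta_S$-endomorphism of $L$ extends uniquely to $\MV$ and is given by right multiplication with a unique $g \in \MA$ satisfying $Lg \subseteq L$; such a $g$ is a unit of $\End_{\Delta_S}(L)$ exactly when $Lg = L$, where one uses that $Lg = L$ with $g \in \MA$ already forces $g \in \MA^\times$ because $L$ spans $\MV$ over $K$.

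The one substantive point, and the step I expect to need the real argument, is the inclusion $\{g \in \MA \mid Lg = L\} \subseteq \Lambda_{S'}^\times$. Here I would argue place by place: fix $\fq \notin S'$ and complete the equality $Lg = L$ at $\fq$; since $L \in \MM(L_0,S')$ we have $L \otimes \CO_{K_\fq} = L_0 \otimes \CO_{K_\fq}$, so the completed equality reads $(L_0 \otimes \CO_{K_\fq})\, g = L_0 \otimes \CO_{K_\fq}$, i.e. $g \in \End_{\Delta_\fq}(L_0 \otimes \CO_{K_\fq})^\times = \Lambda_\fq^\times$. Thus $g$ and $g^{-1}$ both lie in $\Lambda_\fq$ for every $\fq \notin S'$, and the local-global principle for $\CO_{K,S'}$-lattices (invoked exactly as in the proof of the Theorem, using $\Lambda_{S'} = \bigcap_{\fq \notin S'}(\MA \cap \Lambda_\fq)$) yields $g, g^{-1} \in \Lambda_{S'}$, hence $g \in \Lambda_{S'}^\times$. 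Combining this with the first piece gives the asserted equalities.
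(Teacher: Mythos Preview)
Your proof is correct and follows exactly the same approach as the paper's: the paper also notes that the equivalence of $\Lambda_{S'}^\times$-sets reduces everything to showing that $Lg=L$ forces $g\in\Lambda_{S'}^\times$, and justifies this in one line by ``$L$ and $L_0$ coincide away from $S'$'', which is precisely the place-by-place argument you spell out in detail. Your write-up simply makes explicit what the paper leaves to the reader.
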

\begin{proof}
 Since we have an equivalence of $\Lambda_{S'}^\times$-sets the only thing we need to show is that any $g \in \MA$ that fixes $L$ is already an element of $\Lambda_{S'}^\times$ but this is clear since $L$ and $L_0$ coincide away from $S'$.
\end{proof}

Now let $\CP$ be the ideal of $\Delta_S$ such that $\CP \otimes_{\CO_{K,S}}\CO_{K_\fp} = (\pi \CO_\fp)^{m \times m}$. Moreover for a lattice $L$ we set
\begin{equation}
 [L]=\{\CP^k L~|~k \in \MZ \}.
\end{equation}

\begin{Lemma}
 The map $\phi$ extends to
 \begin{equation}
 \begin{split}
  \tphi:&\{[M]~|~ M \in \MM(L_0,S')\} \rightarrow \text{Vertices of } \FB\\
   & [M] \mapsto [\phi(M)]
 \end{split}
\end{equation}
and this is again an equivalence of $\Lambda_{S'}^\times$-sets.
\end{Lemma}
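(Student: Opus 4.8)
The plan is to deduce the statement from the preceding Theorem --- which already identifies $\phi$ as an equivalence of $\Lambda_{S'}^\times$-sets between $\MM(L_0,S')$ and the set of $\CO_\fp$-lattices in $V_\fp$ --- together with a single explicit computation, namely that $\phi$ intertwines left multiplication by the ideal $\CP$ with left multiplication by the uniformizer $\pi$. Concretely, I would first prove the key fact that for every $M\in\MM(L_0,S')$ and every $k\in\MZ$ one has $\CP^k M\in\MM(L_0,S')$ and $\phi(\CP^k M)=\pi^k\phi(M)$, where $\pi$ acts through the left $\CO_\fp$-module structure of $V_\fp$. Granting this, well-definedness of $\tphi$ is immediate: two representatives of the same vertex on the left-hand side differ by some $\CP^k$, so $[\phi(\CP^k M)]=[\pi^k\phi(M)]=[\phi(M)]$, and $\tphi([M])=[\phi(M)]$ really depends only on the class $[M]$.

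For the key fact I would argue locally. First, $\CP^k M\in\MM(L_0,S')$ because $\CP$ is concentrated at $\fp$, i.e. $\CP\otimes_{\CO_{K,S}}\CO_{K_\fq}=\Delta_S\otimes_{\CO_{K,S}}\CO_{K_\fq}$ for every $\fq\notin S\cup\{\fp\}$, so that $\CP^k M$ agrees with $M$, hence with $L_0$, at all primes outside $S'$. For the formula, since completion at $\fp$ is flat we get $\phi(\CP M)=\epsilon\bigl((\CP M)\otimes_{\CO_{K,S}}\CO_{K_\fp}\bigr)=\epsilon\,(\pi\CO_\fp)^{m\times m}\bigl(M\otimes_{\CO_{K,S}}\CO_{K_\fp}\bigr)$. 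Writing $(\pi\CO_\fp)^{m\times m}=(\pi I_m)\,\CO_\fp^{m\times m}$ with $\pi I_m=\diag(\pi,\dots,\pi)$, using that $M\otimes_{\CO_{K,S}}\CO_{K_\fp}$ is already a module over $\Delta_\fp\cong\CO_\fp^{m\times m}$, and that $\pi I_m$ commutes with $\epsilon$ (both being diagonal), this equals $\pi I_m\,\epsilon\,(M\otimes_{\CO_{K,S}}\CO_{K_\fp})=\pi I_m\,\phi(M)$; finally $\pi I_m$ acts on $V_\fp=\epsilon\MV_\fp\cong D_\fp^{1\times nm}$ exactly as the scalar $\pi$, giving $\phi(\CP M)=\pi\phi(M)$. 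Iterating (and using $\CP^{-1}$ for negative exponents) yields $\phi(\CP^k M)=\pi^k\phi(M)$ for all $k\in\MZ$.

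The remaining assertions are then formal. Surjectivity: every vertex of $\FB$ is $[N]$ for some $\CO_\fp$-lattice $N\subset V_\fp$; by surjectivity of $\phi$ choose $M\in\MM(L_0,S')$ with $\phi(M)=N$, and then $\tphi([M])=[N]$. Injectivity: if $[\phi(M)]=[\phi(N)]$ then $\phi(N)=\pi^k\phi(M)=\phi(\CP^k M)$ for some $k$, so $N=\CP^k M$ by injectivity of $\phi$, i.e. $[M]=[N]$. Equivariance: for $g\in\Lambda_{S'}^\times$ the operation $[M]\cdot g:=[Mg]$ is well-defined on both sides because $\CP\subset\Delta_S$ (resp. $\pi\in\CO_\fp$) acts on the left while $g$ acts on the right, so the two actions commute; hence $\tphi([M]\cdot g)=[\phi(Mg)]=[\phi(M)g]=\tphi([M])\cdot g$ by the $\Lambda_{S'}^\times$-equivariance of $\phi$ from the Theorem.

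The only non-routine point --- and hence the main obstacle --- is the local computation $\phi(\CP M)=\pi\phi(M)$: one has to match the abstract ideal $\CP$ of $\Delta_S$ with the uniformizer $\pi$ of $\CO_\fp$ through the identifications $\Delta_\fp\cong\CO_\fp^{m\times m}$, $\epsilon=\diag(1,0,\dots,0)$ and $V_\fp=\epsilon\MV_\fp\cong D_\fp^{1\times nm}$, checking in particular that $\CP$ is indeed trivial away from $\fp$ and that $\pi I_m$ both commutes with $\epsilon$ and restricts to the scalar $\pi$ on $V_\fp$. Everything downstream of that is a formal consequence of the already established Theorem.
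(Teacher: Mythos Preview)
Your proposal is correct and follows essentially the same approach as the paper: the core of both arguments is the identity $\phi(\CP^k M)=\pi^k\phi(M)$, obtained from the commutation of $\epsilon$ with $\pi$ (or $\pi I_m$), after which well-definedness, bijectivity and equivariance are formal consequences of the already established Theorem. Your write-up is more explicit about verifying $\CP^k M\in\MM(L_0,S')$ and about surjectivity, but the underlying idea is identical.
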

\begin{proof}
 The only thing we need to show is that for $L,M \in \MM(L_0,S')$ we have $[L]=[M]$ if and only if $[\phi(L)] = [\phi(M)]$. But since $\epsilon$ and $\pi$ commute we see 
\begin{equation}
  \pi^i \epsilon(L \otimes \CO_{K_\fp})= \epsilon \pi^i(L \otimes \CO_{K_\fp})=\epsilon(\CP^i L \otimes \CO_{K_\fp})=\phi(\CP^i L). 
 \end{equation}
 The compatibility with the $\Lambda_{S'}^\times$-action is again clear.
\end{proof}

\begin{Corollary}\label{VertexStabilizer}
 Let $L$ be a lattice and $i \in \MZ_{> 0}$ minimal with the property that there exists $g \in \Lambda_{S'}^\times$ with $Lg=\CP^i L$ then
\begin{equation}
 \stab_{\Lambda_{S'}^\times}(\tphi([L])) = \stab_{\Lambda_{S'}^\times}([L]) = \langle (\End_{\Delta_S}(L))^\times,g \rangle.
\end{equation}
\end{Corollary}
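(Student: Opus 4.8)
Throughout the proof write $G := \Lambda_{S'}^\times$. The first equality $\stab_{G}(\tphi([L])) = \stab_{G}([L])$ needs no argument: by the preceding lemma $\tphi$ is an equivalence of $G$-sets, and a $G$-equivariant bijection carries the stabilizer of a point onto the stabilizer of its image. All the work therefore lies in the second equality, and the plan is to analyse the natural ``translation length'' homomorphism attached to the homothety class $[L]$.

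First I would observe that $h \in G$ stabilizes $[L] = \{\CP^k L \mid k \in \MZ\}$ if and only if $Lh = \CP^j L$ for some $j \in \MZ$, and that such a $j$ is uniquely determined by $h$: completing at $\fp$, where $\CP \otimes_{\CO_{K,S}}\CO_{K_\fp} = (\pi\CO_\fp)^{m\times m}$ is a proper ideal, the lattices $\CP^j L$ and $\CP^{j'} L$ already differ at $\fp$ as soon as $j \neq j'$ (using $\bigcap_N \pi^N(L \otimes \CO_{K_\fp}) = 0$). Hence $h \mapsto j$ is a well-defined map $\Phi \colon \stab_G([L]) \to \MZ$. It is a group homomorphism because left multiplication by a power of the ideal $\CP$ commutes with right multiplication by elements of $\MA$: if $Lh = \CP^j L$ and $Lh' = \CP^{j'}L$, then $L(hh') = (\CP^j L)h' = \CP^j (Lh') = \CP^{j+j'} L$, so $\Phi(hh') = \Phi(h) + \Phi(h')$.

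Next I would identify the kernel and the image of $\Phi$. The kernel is $\{h \in G \mid Lh = L\} = \stab_G(L)$, which by Corollary \ref{LatticeStabilizer} (applied via the $G$-equivalence $\phi$) equals $(\End_{\Delta_S}(L))^\times$. The image is a subgroup of $\MZ$, and the hypothesis on $i$ and $g$ says precisely that this subgroup is $i\MZ$ with $\Phi(g) = i$ a generator. Given any $h \in \stab_G([L])$, write $\Phi(h) = ki$; then $hg^{-k} \in \ker\Phi = (\End_{\Delta_S}(L))^\times$, so $h \in \langle (\End_{\Delta_S}(L))^\times, g\rangle$, and the reverse containment is obvious. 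Thus $\stab_G([L]) = \langle (\End_{\Delta_S}(L))^\times, g\rangle$; equivalently, the extension $1 \to \ker\Phi \to \stab_G([L]) \xrightarrow{\Phi} i\MZ \to 1$ splits because $i\MZ$ is free.

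The only step that is not purely formal is the well-definedness of $\Phi$, i.e. the implication $\CP^j L = \CP^{j'} L \Rightarrow j = j'$; once this is granted, everything else is routine bookkeeping with the short exact sequence above, and I do not anticipate any further obstacle.
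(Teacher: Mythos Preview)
Your proof is correct and follows essentially the same approach as the paper's. The paper argues directly via a B\'ezout identity: given $h$ with $Lh=\CP^jL$, one writes $ai+bj=\gcd(i,j)$, observes $Lg^ah^b=\CP^{\gcd(i,j)}L$, and invokes minimality of $i$ to force $i\mid j$; you package this same idea as the group homomorphism $\Phi$ whose image, being a subgroup of $\MZ$ containing $i$ as its least positive element, must equal $i\MZ$. Your explicit verification that $\CP^jL=\CP^{j'}L$ forces $j=j'$ (via the $\fp$-adic completion) is a point the paper leaves implicit, so if anything your write-up is slightly more careful.
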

\begin{proof}
The first equality is simply due to the fact that $\tphi$ is an equivalence of $\Lambda_{S'}^\times$-sets.
 Now let $i$ and $g$ be as in the assertion and let $h \in \stab_{\Lambda_{S'}^\times}([L])$. Then there is $j \in \MZ$ such that $Lh=\CP^j L$. We choose integers $a,b$ such that $ai+bj=\mathrm{gcd}(i,j)$ and see
\begin{equation} 
Lg^ah^b=\CP^{ai+bj}L=\CP^{\mathrm{gcd}(i,j)}L.
\end{equation}
 By the choice we made for $i$ we thus have $j =k\cdot i$ for some $k \in \MZ$ whence $L(hg^{-k})=L$. But this already implies $hg^{-k} \in \stab_{\Lambda_{S'}^\times}(L)$ and so 
\begin{equation}
h \in \langle \stab_{\Lambda_{S'}^\times}(L),g \rangle = \langle (\End_{\Delta_S}(L))^\times,g \rangle.
\end{equation}
\end{proof}

\begin{Remark}\label{StabilizerRemark}
\begin{enumerate}
 \item Following the above lemma there is a short exact sequence
\begin{equation}
 1 \rightarrow (\End_{\Delta_S}(L))^\times \hookrightarrow \stab_{\Lambda_{S'}^\times}([\phi(L)]) \twoheadrightarrow (\MZ,+) \rightarrow 0.
\end{equation}
Thus we obtain a presentation for $\stab_{\Lambda_{S'}^\times}([\phi(L)])$ if we have one for $(\End_{\Delta_S}(L))^\times$ and can perform constructive membership in the given generators.
\item There is a positive integer $k$ such that $\CP^k$ is generated by some central element $\rho \in \CO_{K,S}$. Then $L\cdot(\rho \cdot 1_{\Lambda_{S'}})=\rho L =\CP^kL$ thus the integer $i$ from the above lemma is bounded by $k$.
\end{enumerate}
\end{Remark}

The fact that $\tphi$ is an equivalence of $\Lambda_{S'}^\times$-sets also allows us to check whether to vertices of $\FB$ are in the same $\Lambda_{S'}^\times$-orbit.
\begin{Lemma}\label{LatticeIsomorphisms}
 Let $L,M$ be lattices and let $i \in \MZ_{>0}$ be minimal with the property that there exists $h \in \Lambda_{S'}^\times$ such that $M h =\CP^i M$. Then $[L]$ and $[M]$ (and equivalently $\tphi([L])$ and $\tphi([M])$) are in the same $\Lambda_{S'}^\times$-orbit if and only if there exists $0 \leq k < i$ such that there is an isomorphism of $\Lambda_S$-lattices between $L$ and $\CP^k M$.
\end{Lemma}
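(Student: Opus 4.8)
The plan is to reduce the orbit question to an isomorphism question using the fact that $\tphi$ is an equivalence of $\Lambda_{S'}^\times$-sets together with Corollary~\ref{LatticeStabilizer}. First I would translate the statement: by the previous lemma, $[L]$ and $[M]$ lie in the same $\Lambda_{S'}^\times$-orbit if and only if there is $g \in \Lambda_{S'}^\times$ with $[Lg] = [M]$, i.e. $Lg = \CP^j M$ for some $j \in \MZ$. So the task is to show that such a pair $(g,j)$ exists if and only if for some $0 \le k < i$ there is a $\Lambda_S$-lattice isomorphism $L \cong \CP^k M$. Note that a $\Lambda_S$-lattice isomorphism here means an element of $\MA^\times$ carrying one lattice onto the other; by the same argument as in Corollary~\ref{LatticeStabilizer} (both lattices agree with $L_0$ away from $S'$ after we further observe $\CP^k M$ does too, since $\CP$ is trivial away from $\fp$), any such element automatically lies in $\Lambda_{S'}^\times$.

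For the ``if'' direction: suppose $\varphi \in \MA^\times$ gives $L\varphi = \CP^k M$ for some $0 \le k < i$. Then $\varphi \in \Lambda_{S'}^\times$ by the remark just made, and $[L\varphi] = [\CP^k M] = [M]$, so $\tphi([L])$ and $\tphi([M])$ are in the same orbit. For the ``only if'' direction: suppose $g \in \Lambda_{S'}^\times$ with $Lg = \CP^j M$. Write $j = qi + k$ with $0 \le k < i$ by division with remainder; using the element $h$ with $Mh = \CP^i M$, we get $M h^{q} = \CP^{qi} M$ (iterating, since $h$ normalizes the homothety class in the appropriate sense — this needs a small check that $h^q$ sends $M$ to $\CP^{qi}M$, which follows by induction applying $Mh = \CP^i M$ and the fact that $h$ acts $\CO_{K,S}$-linearly so commutes with multiplication by the central-ideal powers). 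Then $L (g h^{-q})$, or rather the composite adjusted appropriately, satisfies $L\,g\,h^{-q} = \CP^{j} M \cdot (\text{correction}) = \CP^{k} M$; so $g h^{-q} \in \Lambda_{S'}^\times$ realizes an isomorphism $L \cong \CP^k M$ of $\Lambda_S$-lattices.

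The main obstacle I anticipate is bookkeeping with the non-commutativity and with the homothety action of $h$: one must be careful that $Mh = \CP^i M$ implies $M h^q = \CP^{qi} M$, which is where one uses that $h \in \Lambda_{S'}^\times$ acts by $\CO_{K,S}$-linear (right-)multiplication and that $\CP$ is an ideal of $\Delta_S$ stable under the relevant operations, so that $(\CP^i M) h = \CP^i (M h) = \CP^{2i} M$, etc. A secondary point to get right is the claim that every lattice isomorphism $L \to \CP^k M$ is induced by an element of $\Lambda_{S'}^\times$: this is exactly the argument of Corollary~\ref{LatticeStabilizer}, invoking that $L$, $M$, and hence $\CP^k M$ all coincide with $L_0$ after tensoring with $\CO_{K_\fq}$ for $\fq \notin S'$, so any $\MA^\times$-element matching them up is locally a unit of $\Lambda$ outside $S'$ and therefore lies in $\Lambda_{S'}^\times$. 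Once these two points are secured, the equivalence drops out of the division-with-remainder argument above, and minimality of $i$ guarantees the representatives $\CP^k M$ for $0 \le k < i$ are genuinely distinct up to the orbit, so the stated range cannot be shrunk.
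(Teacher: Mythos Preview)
Your proposal is correct and follows essentially the same route as the paper: reduce the orbit question to $Lg=\CP^jM$ for some $j$, then use division with remainder and powers of $h$ to land in the range $0\le k<i$, and for the converse observe that any $\MA^\times$-element realizing $L\cong \CP^kM$ lies in $\Lambda_{S'}^\times$ since both lattices coincide with $L_0$ away from $S'$. Your final sentence about the range not being shrinkable is extraneous (the lemma only asserts existence of some $k$), but harmless.
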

\begin{proof}
 First assume that there is an element $g \in \Lambda_{S'}^\times$ such that $[L]g=[M]$, so $Lg =\CP^j M$ for some $j \in \MZ$. Then there is an integer $a$ such that $0 \leq j+ai <i$ and we have $Lgh^a=\CP^{j+ai}M$ so $gh^a$ is an isomorphism of $\Lambda_S$-lattices between $L$ and one of the lattices $\CP^k M$ with $0 \leq k <i$.
 
 On the other hand if $L$ and $\CP^k M$ (with $0 \leq k < i$) are isomorphic as $\Lambda_S$-lattices there exists $g \in A^\times$ such that $Lg = \CP^k M$ and since $g$ fixes $L$ (and thus $L_0$) away from $S'$ it is an element of $\Lambda_{S'}^\times$.
\end{proof}
Let us denote by $\mathrm{Rep}(L_0,S,\fp)$ a set of lattices such that $\{[L]~|~L \in \mathrm{Rep}(L_0,S,\fp) \}$ is a system of representatives of $\{[L]~|~L \in \MM(L_0,S')\}$ modulo the action of $\Lambda_{S'}^\times$.
\begin{Corollary}
Since there are only finitely many isomorphism classes of $\Lambda_S$-lattices for a given dimension, the set $\mathrm{Rep}(L_0,S,\fp)$ is necessarily finite.
\end{Corollary}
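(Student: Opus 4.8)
The plan is to bound the number of $\Lambda_{S'}^\times$-orbits on $\{[L]~|~L \in \MM(L_0,S')\}$, since $\mathrm{Rep}(L_0,S,\fp)$ contains by definition exactly one lattice for each such orbit. The starting point is that, by the very definition of $\MM(L_0,S')$, each of its members $M$ is a $\Delta_S$-left-lattice in $\MV$, and since $M$ agrees with $L_0$ at every prime $\fq \notin S'$ it spans $\MV$ over $K$; hence all the lattices in $\MM(L_0,S')$ are full $\Delta_S$-lattices of one and the same rank. The finiteness assumption in the statement then applies equally to $\Delta_S$-lattices of that rank --- either directly, since Jordan--Zassenhaus holds for the order $\Delta_S$, or by transporting the hypothesis through the Morita equivalence between $\Lambda_S$ and $\Delta_S$.

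The key step is to observe that if $L, M \in \MM(L_0,S')$ are isomorphic \emph{as $\Delta_S$-lattices}, then $[L]$ and $[M]$ already lie in the same $\Lambda_{S'}^\times$-orbit. This is the easy direction of Lemma \ref{LatticeIsomorphisms} (take $k = 0$): an isomorphism $L \to M$ of $\Delta_S$-lattices becomes, after tensoring with $K$, a $\MD$-linear automorphism of $\MV$, i.e.\ an element $g \in \MA^\times$ with $Lg = M$; because $L$ and $M$ both coincide with $L_0$ at every prime outside $S'$, both $g$ and $g^{-1}$ stabilise $L_0$ locally everywhere outside $S'$, so $g \in \Lambda_{S'}^\times$, exactly as argued in the proof of Corollary \ref{LatticeStabilizer}. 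Hence $[L]g = [M]$.

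Putting the two together, the map that sends $L \in \MM(L_0,S')$ to the $\Lambda_{S'}^\times$-orbit of $[L]$ factors through the set of isomorphism classes of full $\Delta_S$-lattices of that rank, and this set is finite; since the factorisation is visibly onto the set of orbits, the latter is finite, and therefore $\mathrm{Rep}(L_0,S,\fp)$ is finite. I do not anticipate a real obstacle: the one non-formal ingredient, finiteness of the number of isomorphism classes of lattices over an order in a fixed rank, is precisely what the statement grants us, and the remaining points --- extending a lattice isomorphism to an element of $\MA^\times$ and verifying it lies in $\Lambda_{S'}^\times$ --- merely repeat reasoning already carried out earlier in the section.
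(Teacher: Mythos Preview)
Your proposal is correct and is essentially a fleshed-out version of what the paper leaves implicit: the Corollary in the paper carries no separate proof and is meant to follow immediately from the preceding Lemma~\ref{LatticeIsomorphisms} (only the easy direction, $k=0$, is needed) together with the Jordan--Zassenhaus finiteness of isomorphism classes. Your discussion of $\Delta_S$- versus $\Lambda_S$-lattices and the explicit verification that the isomorphism lies in $\Lambda_{S'}^\times$ are both accurate and simply make explicit what the paper takes for granted.
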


The edges in $\FB$ are of the form 
\begin{equation}
 [L'] - [M']
\end{equation}
where $L'$ and $M'$ are $\CO_\fp$-lattices in $V_\fp$ and $\pi L' \subsetneq M' \subsetneq L'$ or equivalently the edges are of the form
\begin{equation}
 [\phi(L)] - [\phi(M)]
\end{equation}
where $L,M \in \MM(L_0,S')$ with $\CP L \subsetneq M \subsetneq L$. In particular, any vertex in $\FB$ has degree 
\begin{equation}
 \sum_{k=1}^{nm-1}{\binom{nm}{k}}_q
\end{equation}
where $q$ is the order of the residue field $\CO_\fp/\pi$ and ${\binom{nm}{k}}_q$ denotes the Gaussian binomial coefficient. The explicit description of the edges also allows us to quickly determine the combinatorial distance between two vertices of $\FB$ (i.e. the length of a shortest path between the two).
\begin{Lemma}
 Let $L, M \in \MM(L_0,S)$ and let $k \in \MZ$ be minimal with $\CP^k M \subset L$. Then the distance between the vertices $\tphi([L])$ and $\tphi([M])$ in $\FB$ is
\begin{equation}
 \dist(\tphi([L]),\tphi([M]))=\min_{i \in \MZ} \CP^i L \subset \CP^kM.
\end{equation}
\end{Lemma}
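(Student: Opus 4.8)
The plan is to push the whole statement into the local building $\FB$ via the equivalence $\tphi$ and then to read off both sides from the elementary divisors of $M$ relative to $L$. As established above, $\tphi$ identifies $[\phi(L)]$ and $[\phi(M)]$ with the two vertices in question, and (as in the proof that $\phi$ extends to $\tphi$) one has $\phi(\CP^iL)=\pi^i\phi(L)$ and $\phi(\CP^iM)=\pi^i\phi(M)$ for all $i$, with the containment $\CP^iL\subseteq\CP^kM$ equivalent to its $\fp$-localization. Hence, writing $L':=\phi(L)$ and $M':=\phi(M)$ for the corresponding $\CO_\fp$-lattices in $V_\fp$, it suffices to prove: if $k\in\MZ$ is minimal with $\pi^kM'\subseteq L'$, then the graph distance in $\FB$ satisfies $\dist([L'],[M'])=\min\{i\in\MZ\mid \pi^iL'\subseteq\pi^kM'\}$.

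Since $\CO_\fp$ is a principal ideal domain whose unique maximal ideal $\pi\CO_\fp$ is two-sided, the theory of elementary divisors for $\CO_\fp$-lattices (see \cite{Reiner}) furnishes a $D_\fp$-basis $b_1,\dots,b_{nm}$ of $V_\fp$ and integers $d_1\le\dots\le d_{nm}$ with
\[
L'=\bigoplus_{j=1}^{nm}\CO_\fp b_j,\qquad M'=\bigoplus_{j=1}^{nm}\CO_\fp\pi^{d_j}b_j .
\]
A direct computation with these data gives $k=-d_1$ and $\min\{i\mid\pi^iL'\subseteq\pi^kM'\}=d_{nm}-d_1$, so it remains to show $\dist([L'],[M'])=d_{nm}-d_1$.

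For the upper bound, after rescaling $M'$ inside $[M']$ so that $d_1=0$ and putting $e:=d_{nm}$, I would consider the lattices $L'_t:=\bigoplus_j\CO_\fp\pi^{\min(d_j,t)}b_j$ for $0\le t\le e$. Then $L'_0=L'$, $L'_e=M'$, and $\pi L'_t\subsetneq L'_{t+1}\subsetneq L'_t$, so $[L'_t]\cup[L'_{t+1}]$ is a chain and hence these are adjacent vertices of $\FB$; this is a walk of length $e$ from $[L']$ to $[M']$, giving $\dist([L'],[M'])\le e=d_{nm}-d_1$. For the lower bound, I would attach to every $\CO_\fp$-lattice $P$ in $V_\fp$ the two finite integers $h^+(P):=\max\{a\mid P\subseteq\pi^aL'\}$ and $h^-(P):=\min\{a\mid\pi^aL'\subseteq P\}$; both increase by $1$ when $P$ is replaced by $\pi P$, so $\delta([P]):=h^-(P)-h^+(P)$ is an invariant of the homothety class, and from the elementary-divisor form $\delta([M'])=d_{nm}-d_1$ while $\delta([L'])=0$. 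The crux is that $\delta$ is $1$-Lipschitz for the graph metric on $\FB$: if $[P]$ and $[Q]$ are adjacent, choose representatives with $\pi P\subsetneq Q\subsetneq P$; then $Q\subseteq P$ yields $h^\pm(Q)\ge h^\pm(P)$, while $\pi P\subseteq Q$ yields $h^+(Q)\le h^+(P)+1$ and $h^-(Q)\le h^-(P)+1$, so $\lvert\delta([Q])-\delta([P])\rvert\le1$. Evaluating $\delta$ along a geodesic from $[L']$ to $[M']$ then gives $d_{nm}-d_1=\delta([M'])\le\dist([L'],[M'])$, and combined with the upper bound this is the assertion.

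The only step carrying real geometric content is the $1$-Lipschitz estimate for $\delta$ used in the lower bound; the remaining steps are bookkeeping with lattices over $\CO_\fp$, where the one thing to be careful about is invoking the elementary-divisor normal form in the correct noncommutative-but-principal setting.
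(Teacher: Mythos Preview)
Your proof is correct, and it takes a genuinely different route from the paper's argument. The paper never puts $L'$ and $M'$ into simultaneous diagonal form; instead it stays coordinate-free throughout. For the upper bound it writes down the explicit chain
\[
L \supset M+\CP L \supset \CP L \supset M+\CP^2L \supset \CP^2L \supset \cdots \supset M \supset \CP^dL,
\]
which visibly has $d$ steps between $[L]$ and $[M]$. For the lower bound the paper takes a shortest path, realizes it as a descending chain $L=N_0\supsetneq N_1\supsetneq\cdots\supsetneq N_t$ with $\CP N_i\subset N_{i+1}$, and observes that if $\CP N_i\subseteq N_{i+2}$ for some $i$ then $[N_i]$ and $[N_{i+2}]$ would be adjacent, contradicting minimality; from $\CP N_i\nsubseteq N_{i+2}$ one then reads off $d\le t$.

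Your argument trades this bare-hands chain manipulation for two conceptual tools: the elementary-divisor normal form over the noncommutative discrete valuation ring $\CO_\fp$, and the $1$-Lipschitz invariant $\delta([P])=h^-(P)-h^+(P)$ on vertices of $\FB$. The Lipschitz trick is elegant and robust (it generalizes to other type functions on buildings), and it makes the lower bound completely transparent. The cost is that you invoke the structure theorem for lattices over $\CO_\fp$, which the paper avoids entirely; the paper's proof would work verbatim over any ring where $\CP$ makes sense, without needing elementary divisors. Both the paper's path $[\CP^tL+M]$ and your path $[L'_t]$ are in fact the same geodesic in different clothing once one unwinds the diagonal form.
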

\begin{proof}
 Since $[M]=[\CP^k M]$ we may assume $k=0$. Set $d:=\min_{i \in \MZ} \CP^i L \subset \CP^kM$ and look at the chain
\begin{equation}
 L \supset M+\CP L \supset \CP L \supset \CP^2 L + M \supset \CP^2 L \supset... \supset \CP^d L + M = M \supset \CP^d L.
\end{equation}
Then this chain defines a path of length $d$ between $[L]$ and $[M]$,
\begin{equation}
 \tphi([L])-\tphi([\CP L + M])-...-\tphi([\CP^{d-1} L +M])-\tphi([M]),
\end{equation}
so the distance between $[L]$ and $[M]$ is at most $d$.

On the other hand let 
\begin{equation}
 L=N_0 \supset N_1 \supset ... \supset N_t=M
\end{equation}
be a chain of lattices such that
\begin{equation}
 \tphi([N_0])-\tphi([N_1])-...-\tphi([N_{t-1}])-\tphi([N_t])
\end{equation}
is a shortest path between $\tphi([L])$ and $\tphi([M])$. It suffices to show that $\CP N_i \nsubseteq N_{i+2}$ for all $0 \leq i \leq t-2$, since this implies that $M \nsubseteq \CP^{t-1}L$. Assume on the contrary that there is some $i$ such that $\CP N_{i} \subset N_{i+2}$, but then $\tphi([N_i])-\tphi([N_{i+2}])$ is also an edge of $\FB$ which means that there is a shorter path between $\tphi([L])$ and $\tphi([M])$ in contradiction to our choice. Hence the distance between these two points is at least $d$ which proves the assertion.
\end{proof}

 Following the description of the edges we find that all $2$-cells (i.e. triangles) are of the form 
  \begin{center}
\begin{tikzpicture}
  \draw[-, shorten <= .4cm, shorten >= 0.4cm,above] (0,2) to (2,2);
  \draw[-, shorten <= .3cm, shorten >= 0.3cm,left] (0,2) to (0,0);
  \draw[-, shorten <= .4cm, shorten >= 0.4cm,above] (2,2) to (0,0);
  \node at (0,0) {$[N]$};
  \node at (0,2) {$[L]$};
  \node at (2,2) {$[M]$};
\end{tikzpicture}
\end{center}
where $L,M,N \in \MM(L_0,S')$ such that $\CP L \subsetneq N \subsetneq M \subsetneq L$.

\begin{Lemma}\label{minustype}
 Let $L,M \in \MM(L_0,S')$ such that $[\phi(L)]=[\phi(Mg)]$ for some $g \in \Lambda_{S'}^\times$ and $E:[\phi(L)]-[\phi(M)]$ is an edge of $\FB$. Then $E$ is of the minus-type if and only if $[\phi(M)]$ is in the $\stab_{\Lambda_{S'}^\times}([\phi(L)])$-orbit of $Lg$. Moreover since $\stab_{\Lambda_{S'}^\times}([\phi(L)])$ is finitely generated and this orbit is finite this can be (constructively) decided by classical orbit enumeration. 
\end{Lemma}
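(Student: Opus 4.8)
The plan is to derive the statement formally from two ingredients already in place: that $\tphi$ — and hence $\phi$, since $\tphi([N])=[\phi(N)]$ — is an equivalence of $\Lambda_{S'}^\times$-sets, and the definition of a minus-type edge, namely that some group element inverts it. Write $G:=\Lambda_{S'}^\times$ and abbreviate the endpoints of $E$ by $u:=[\phi(L)]$ and $v:=[\phi(M)]$. Since $\phi$ is $G$-equivariant, $[\phi(Mg)]=[\phi(M)]\,g=vg$, so the hypothesis $[\phi(L)]=[\phi(Mg)]$ says exactly $u=vg$; equivalently $v=ug^{-1}$ and $[\phi(Lg)]=ug$. Moreover, $E$ is of minus-type if and only if there is $\sigma\in G$ interchanging the two endpoints, i.e. with $u\sigma=v$ and $v\sigma=u$ (this is just unwinding ``the orientation of $E$ is reversed by $G$'' in the simplicial complex $\FB$).

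With this reformulation the equivalence is a short manipulation, and I would carry it out in both directions. For the ``if'' part, suppose $v=ug\,h$ with $h\in\stab_G(u)$; then $\sigma:=gh\in G$ satisfies $u\sigma=ugh=v$ and $v\sigma=(vg)h=uh=u$, the last step because $h$ fixes $u$, so $\sigma$ inverts $E$ and $E$ is of minus-type. For the converse, given $\sigma\in G$ inverting $E$, the relations $v=ug^{-1}$ and $v\sigma=u$ yield $ug^{-1}\sigma=u$, hence $h:=g^{-1}\sigma\in\stab_G(u)$; then $\sigma=gh$ and $v=u\sigma=ug\,h=[\phi(Lg)]\,h$, so $v$ lies in the $\stab_G(u)$-orbit of $[\phi(Lg)]$.

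For the ``moreover'' I would establish effectivity in three steps. First, $\stab_G(u)=\stab_{\Lambda_{S'}^\times}([\phi(L)])$ is finitely generated: by Corollary~\ref{VertexStabilizer} (see also Remark~\ref{StabilizerRemark}) it is generated by $(\End_{\Delta_S}(L))^\times$ together with a single extra element, and $(\End_{\Delta_S}(L))^\times$ is the $S$-unit group of the order $\End_{\Delta_S}(L)$ — with $|S|$ one smaller — for which a presentation, hence a finite generating set, is available in the inductive setup. Second, the orbit in question is finite: applying $g$ to the edge $E=\{[\phi(L)],[\phi(M)]\}$ and using $[\phi(M)]g=[\phi(L)]$ shows $\{[\phi(Lg)],[\phi(L)]\}$ is an edge, so $[\phi(Lg)]$ is a neighbour of $u$; since every element of $\stab_G(u)$ is a simplicial automorphism of $\FB$ fixing $u$, it permutes the finitely many (namely $\sum_{k=1}^{nm-1}\binom{nm}{k}_q$) neighbours of $u$, whence the $\stab_G(u)$-orbit of $[\phi(Lg)]$ lies inside that finite set. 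Third, the $G$-action on vertices is computable: a vertex is represented by a lattice $N\in\MM(L_0,S')$, one multiplies by the group element and passes to the homothety class, testing equality of classes by the lattice-theoretic criteria above. Hence a breadth-first enumeration of the $\stab_G(u)$-orbit of $[\phi(Lg)]$ terminates, and checking whether $[\phi(M)]$ occurs in it decides whether $E$ is of minus-type.

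I expect the only delicate point to be the finiteness of that orbit (the second step): $\stab_G(u)$ is in general infinite, so a priori an orbit enumeration need not terminate, and it is precisely the observation that $[\phi(Lg)]$ is a neighbour of $[\phi(L)]$ — together with the local finiteness of $\FB$ — that makes the procedure effective. Everything else is routine bookkeeping once the $\Lambda_{S'}^\times$-equivariance of $\tphi$ and an explicit generating set of the vertex stabilizer are available.
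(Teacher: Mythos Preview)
Your proof is correct and follows essentially the same approach as the paper: both directions of the equivalence are established by the same manipulations (your $\sigma=gh$ is the paper's $h=gs$, and your $h=g^{-1}\sigma$ corresponds to the paper's $g^{-1}h$), only with cleaner bookkeeping via the abbreviations $u,v$. Your treatment of the ``moreover'' clause is in fact more complete than the paper's, which merely asserts finiteness of the orbit; your observation that $[\phi(Lg)]$ is a neighbour of $[\phi(L)]$ and that $\stab_G(u)$ permutes the finitely many neighbours of $u$ is exactly the right justification.
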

\begin{proof}
 First note that for $E$ to be of minus-type the vertices $[\phi(L)]$ and $[\phi(M)]$ surely have to be in the same orbit under $\Lambda_{S'}^\times$.
 Now $E$ is of minus-type if there exists $h \in \Lambda_{S'}^\times$ such that $[\phi(Lh)]=[\phi(M)]$ and $[\phi(L)]=[\phi(Mh)]$. But then $[\phi(Mh)]=[\phi(Mg)]$ whence $hg^{-1}$ stabilizes $[\phi(M)]$ or equivalently $g^{-1} h = h^{-1} h g^{-1} h$ stabilizes $[\phi(Mh)]=[\phi(L)]$. This means that $[\phi(M)]=[\phi(Lh)]=[\phi(Lg)]g^{-1}h$ is indeed in the $\stab_{\Lambda_{S'}^\times}([\phi(L)])$-orbit of $Lg$.

 On the other hand if $[\phi(M)]=[\phi(Lg)]s$ for some $s \in \stab_{\Lambda_{S'}^\times}([\phi(L)])$ we set $h:=gs$ and compute 
 $[\phi(Mh)]=[\phi(Mg)]s=[\phi(L)]s=[\phi(L)]$ and $[\phi(Lh)]=[\phi(Lg)]s=[\phi(M)]$ so $E$ is of minus-type.
\end{proof}

After these preliminaries we are prepared to give a rough sketch of the general algorithm for obtaining a presentation of $\Lambda_{S'}^\times$:

\begin{enumerate}
 \item Starting from $L_0$, find a system of representatives $\FR:=\{[\phi(L_0)],...,[\phi(L_s)]\}$ of the vertices of $\FB$ modulo the action of $\Lambda_{S'}^\times$ by iteratively computing neighbours and checking for isomorphisms using Lemma \ref{LatticeIsomorphisms}.
 \item For each $0 \leq i \leq s$ compute a presentation for the stabilizer $\stab_{\Lambda_{S'}^\times}([\phi(L_i)])$ following Remark \ref{StabilizerRemark}.
 \item Compute a system of representatives of the edges of $\FB$ such that each representative has at least one vertex in $\FR$ (and both whenever possible) and compute for each representative $[\phi(L_i)]-[\phi(N)]$ an element $g_N \in \Lambda_{S'}^{\times}$ such that $[\phi(Ng_N)] \in \FR$ (ensuring that $g_N$ fixes the corresponding edge if it is of minus-type according to Lemma \ref{minustype}).
 \item According to \cite[Thm. 1]{BrownPresentations} the group $\Lambda_{S'}^\times$ is generated by the stabilizers of the $[\phi(L_i)]$ and the elements $g_N$ subject (only) to the following relations:
 \begin{enumerate}
  \item The relations among the generators of the stabilizers.
  \item The relations coming from the intersection of stabilizers of neighbouring vertices (differentiating between edges of plus- and minus-type).
  \item The relations arising from the $2$-cells.
 \end{enumerate}
\end{enumerate}

This provides an iterative approach to taking on the problem of finding a presentation for $\Lambda_{S'}^\times$, lowering the size of $S'$ by $1$ in each step. If $|S'|=1$ (or equivalently $S=\emptyset$) in step (2) we are in essence left with the task of finding a presentation for groups of the form $(\End_{\Lambda}(L))^\times$ which are unit groups of maximal orders and can thus be handled by the algorithm described in \cite{Brauncomputing}.

We now want to present one last results that significantly lowers the number of computations we actually have to perform when we apply the above algorithm. To that end let us denote the set of infinite places of $K$ by $\CV_\infty$.
\begin{Lemma}\label{StrongApproximation}
 Assume that $|\Lambda_{S}^\times/\CO_{K,S}^\times|=\infty$ and let $L \in \Rep(L_0,S,\fp)$. Then $G:=\stab_{\Lambda_{S'}^\times}(L)$ has exactly $nm-1$ orbits on the (undirected) edges of $\FB$ that contain $[L]$. More precisely there is a chain of $\Lambda_S$-lattices
 \begin{equation}
  \CP L \subsetneq M_1 \subsetneq M_2 \subsetneq ...\subsetneq M_{nm-1} \subsetneq L
 \end{equation}
 such that these orbits are precisely represented by $[L]-[M_i],~ 1 \leq i \leq nm-1$. 

 Moreover the orbits of $2$-cells containing $[L]$ under the action of $G$ are represented by
  \begin{center}
\begin{tikzpicture}
  \draw[-, shorten <= .4cm, shorten >= 0.4cm,above] (0,2) to (2,2);
  \draw[-, shorten <= .3cm, shorten >= 0.3cm,left] (0,2) to (0,0);
  \draw[-, shorten <= .4cm, shorten >= 0.4cm,above] (2,2) to (0,0);
  \node at (0,0) {$[M_j]$};
  \node at (0,2) {$[L]$};
  \node at (2,2) {$[M_i]$};
\end{tikzpicture}
\end{center}
for $nm-1 \geq i >j \geq 1$.
\end{Lemma}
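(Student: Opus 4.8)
The plan is to reduce the whole statement to a fact about the orbits of a finite classical group on subspaces of a vector space, and then to invoke strong approximation to make that group visible. Write $v:=\tphi([L])$ for the vertex of $\FB$ attached to $L$. By Corollary~\ref{LatticeStabilizer} the group $G=\stab_{\Lambda_{S'}^\times}(L)=(\End_{\Delta_S}(L))^\times$ consists of elements of $\MA$ fixing the lattice $L$; since these act on $\MV_\fp$ commuting with $\epsilon$, $G$ fixes the $\CO_\fp$-lattice $\phi(L)$ as well, hence acts $k_\fp$-linearly on the $nm$-dimensional space $\overline W:=\phi(L)/\pi\phi(L)$, where $k_\fp:=\CO_\fp/\pi\CO_\fp$. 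By the well-known Lemma recalled in Section~\ref{Bruhat-Tits} (describing the neighbours of a vertex), the edges of $\FB$ through $v$ are in $G$-equivariant bijection with the proper non-zero $k_\fp$-subspaces of $\overline W$, and, normalising representatives as in the description of the $2$-cells given above, the triangles of $\FB$ containing $v$ correspond $G$-equivariantly to the two-step flags $0\subsetneq U'\subsetneq U\subsetneq\overline W$. So it suffices to understand the orbits, on subspaces and on two-step flags, of the image $\overline G$ of the reduction homomorphism $\rho\colon G\to\GL(\overline W)\cong\GL_{nm}(k_\fp)$.

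The heart of the argument will be to show $\overline G\supseteq\SL_{nm}(k_\fp)$. The assumption $|\Lambda_S^\times/\CO_{K,S}^\times|=\infty$ is equivalent, by reduction theory, to $\prod_{w\in S\cup\CV_\infty}\SL_1(\MA)(K_w)$ being non-compact; since $\SL_1(\MA)=\SL_n(\MD)$ is connected, simply connected and almost $K$-simple, Kneser's theorem then gives strong approximation for $\SL_1(\MA)$ relative to $S$. Now $\End_{\Delta_S}(L)$ is a maximal $\CO_{K,S}$-order in $\MA$ (cf.\ \cite[Cor.~27.6]{Reiner}), so its reduced-norm-one subgroup $G^1$ is an $S$-arithmetic subgroup of $\SL_1(\MA)$, and strong approximation makes $G^1$ dense in the reduced-norm-one units of the maximal $\CO_{K_\fp}$-order $\End_{\Delta_S}(L)\otimes_{\CO_{K,S}}\CO_{K_\fp}\cong\CO_\fp^{nm\times nm}$. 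Since reduction modulo $\pi$ is locally constant, $\rho(G^1)$ coincides with the image of all of these local reduced-norm-one units in $\GL_{nm}(k_\fp)$; and that image contains every elementary transvection $1+\bar t E_{ij}$ with $i\neq j$, because lifting $\bar t$ to $\CO_\fp$ yields a unipotent --- hence reduced-norm-one --- element of $\GL_{nm}(\CO_\fp)$. As $\SL_{nm}(k_\fp)$ is generated by such transvections, we get $\overline G\supseteq\rho(G^1)\supseteq\SL_{nm}(k_\fp)$.

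Finally I would read off the conclusion. The group $\SL_{nm}(k_\fp)$ acts transitively on the $d$-dimensional subspaces of $\overline W$ for each $1\le d\le nm-1$, and more generally on partial flags of any fixed type; hence $\overline G$ has at most $nm-1$ orbits on proper non-zero subspaces and at most $\binom{nm-1}{2}$ orbits on two-step flags, with equality because $\overline G\subseteq\GL_{nm}(k_\fp)$ preserves dimensions. Choosing a complete flag $0\subsetneq\overline W_1\subsetneq\dots\subsetneq\overline W_{nm-1}\subsetneq\overline W$ and pulling it back through $\phi(L)\twoheadrightarrow\overline W$ (and through the bijection $\tphi$) produces a chain of $\Delta_S$-lattices $\CP L\subsetneq M_1\subsetneq\dots\subsetneq M_{nm-1}\subsetneq L$ for which the edges $[L]-[M_i]$ represent the $nm-1$ edge-orbits of $G$ through $[L]$ and the triangles on $\{[L],[M_i],[M_j]\}$ with $nm-1\ge i>j\ge 1$ represent the triangle-orbits, as claimed. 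I expect the middle step to be the only delicate point: one must handle a genuine division algebra rather than a matrix algebra, so the identification of the relevant local stabiliser with the reduced-norm-one units of $\CO_\fp^{nm\times nm}$, and the behaviour of the reduced norm along the congruence filtration at $\fp$, need care --- though the transvection-lifting trick conveniently avoids ever having to compute the reduction of the reduced norm itself.
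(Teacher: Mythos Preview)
Your proof is correct and follows essentially the same route as the paper: identify edges and triangles through $[L]$ with subspaces and two-step flags of $\phi(L)/\pi\phi(L)$, then use strong approximation for the simply connected almost simple group $\SL_n(\MD)$ (valid because the infiniteness hypothesis forces non-compactness at $S\cup\CV_\infty$) to see that the reduction of $G$ contains $\SL_{nm}(k_\fp)$, which acts transitively on flags of each type. Your transvection-lifting argument is a small but genuine improvement in explicitness: the paper simply asserts that density of $G\cap\SL_n(\MD)$ in the local norm-one group implies its reduction contains $\SL_{nm}(\CO_\fp/\pi)$, whereas you actually exhibit the surjection by lifting transvections to unipotents, which sidesteps any worry about how the reduced norm over $D_\fp$ relates to the determinant over the residue field.
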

\begin{proof}
 After a change of basis we may assume that $\phi(L)=\CO_\fp^{nm} \subset V_\fp$. Since $\phi$ is a bijection we find lattices $M_1,...,M_{mn-1} \in \MM(L_0,S)$ such that 
 \begin{equation}
  \pi \phi(L) \subsetneq \phi(M_i)=\underbrace{\pi \CO_\fp \oplus ... \oplus \pi \CO_\fp}_{i} \oplus \underbrace{\CO_\fp \oplus ... \oplus \CO_\fp}_{nm-i} \subsetneq \phi(L).
 \end{equation}
Clearly the action of $G$ fixes the index $|L/M_i|$ so the edges $[L]-[M_i]$ are indeed in pairwise distinct orbits and the same holds for the given $2$-cells. 

On the other hand the group $\SL_{nm}(\CO_\fp/\pi)\cong \SL_{nm}(\MF_q)$ acts transitively on flags of subspaces of $\MF_\fq^{nm}$ (which in turn are in bijection with flags of sublattices between $\CP L$ and $L$). Hence it suffices to show that the image of the composition
\begin{equation}
 G \hookrightarrow \GL_{nm}(\CO_\fp) \rightarrow \GL_{nm}(\CO_\fp/\pi)
\end{equation}
 contains $\SL_{nm}(\CO_\fp/\pi)$, where the second homomorphism is just entry-wise reduction mod $\pi$ (see \cite[Thm. 4.3]{KleinertUnits} for the case $n=1$ and $K=\MQ$). To this end consider the norm-$1$-subgroup
 \begin{equation}
  \SL_n(\MD)=\{g \in \MD^{n \times n}~|~\Nred(g) =1 \}.
 \end{equation}
Then this defines an algebraic group over $K$ that is a form of the almost simple, simply-connected group $\SL_{nm'}$ where $(m')^2=\dim_K(\MD)$. Moreover, our condition on the order of $\Lambda_{S}^\times/\CO_{K,S}$ ensures that $\SL_n(\MD) \cap G$ is also infinite. But then (cf. \cite[Satz 2]{KneserStarkeApproximation}) $\SL_n(\MD)$ has the strong approximation property with respect to $S \cup \CV_\infty$ and thus the image of $(G \cap \SL_n(\MD)) \hookrightarrow \SL_{mn}(\CO_\fp)$ is dense which proves the assertion.
\end{proof}

\begin{Remark}
 The assumption in the previous lemma is automatically fulfilled if $n \geq 2$ or if $S \sqcup \CV_\infty$ contains a place at which $\MD$ is not totally ramified.
\end{Remark}

\section{Special cases}
In this section we want to describe how one can handle the individual tasks that arise in the algorithm described in the last section in certain special instances of algebras. We will use the same notation as in Section \ref{General Situation}.
\subsection{Matrix rings over number fields}
Let us first consider the case where $\MD=K$ is commutative. Then also $\Delta=\CO_K$ and without loss of generality we can assume $L_0=\CO_K^{n-1} \oplus I$ for some integral ideal $I \triangleleft \CO_K$, so
\begin{equation}
 \Lambda = \begin{pmatrix} \CO_K & \hdots &\CO_K & I \\
                            \vdots & \ddots &\vdots & \vdots \\
                            \CO_K & \hdots &\CO_K & I \\
                             I^{-1} & \hdots & I^{-1} & \CO_K
           \end{pmatrix}.
\end{equation}
Moreover, for $n \geq 2$ the condition of Lemma \ref{StrongApproximation} is already fulfilled for $S=\emptyset$ (and thus for every finite $S$).

As usual we denote by $\Cl(\CO_{K,S})$ the class group of $\CO_{K,S}$ and note that $\Cl(\CO_{K,S}) \cong \Cl(\CO_K)/\langle [\fq ]~|~\fq \in S \rangle$. Moreover, we will denote the Steinitz class of a $\CO_{K,S}$-lattice $L$ by $\St(L) \in \Cl(\CO_{K,S})$. 

\begin{Lemma}
 Let $L, M \in \MM(L_0,S)$.
\begin{enumerate}
 \item $L$ and $M$ are in the same $\Lambda_{S'}^\times$-orbit if and only if $\St(L) = \St(M)$.
 \item $[L]$ and $[M]$ are in the same $\Lambda_{S'}^\times$-orbit if and only if $\St(L)\langle [\fp^n] \rangle = \St(M)\langle [\fp^n] \rangle \in \Cl(\CO_{K,S})/\langle [\fp^n] \rangle$.
\end{enumerate}
\end{Lemma}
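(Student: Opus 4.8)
The plan is to reduce both statements to well-known facts about lattices over Dedekind domains, namely that a finitely generated projective $\CO_{K,S}$-module of rank $n$ is classified up to isomorphism by its Steinitz class, and that $\Lambda_{S'}^\times$-orbits of lattices in $\MM(L_0,S)$ correspond (via the preceding material) to isomorphism classes of the relevant $\Delta_S = \CO_{K,S}$-lattices, possibly after twisting by powers of $\CP$. Concretely, for (1): by Lemma \ref{LatticeIsomorphisms} with the degenerate choice $i=1$ coming from $\MM(L_0,S)$ rather than the homothety classes, or more directly by observing that $L,M \in \MM(L_0,S)$ are in the same $\Lambda_{S'}^\times$-orbit if and only if there is $g \in \MA^\times = \GL_n(K)$ with $Lg = M$ (the fact that such a $g$ automatically lies in $\Lambda_{S'}^\times$ was already used in the proof of Corollary \ref{LatticeStabilizer} and Lemma \ref{LatticeIsomorphisms}, since $L$ and $M$ agree with $L_0$ away from $S'$, hence away from $S$). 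Such a $g$ exists precisely when $L$ and $M$ are isomorphic as $\CO_{K,S}$-lattices, and since both have rank $n$ over the Dedekind domain $\CO_{K,S}$, this happens if and only if $\St(L) = \St(M)$.

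For (2): the vertices $[L]$ and $[M]$ of $\FB$ are in the same $\Lambda_{S'}^\times$-orbit if and only if $L$ is isomorphic as an $\CO_{K,S}$-lattice to $\CP^k M$ for some $k$ (this is Lemma \ref{LatticeIsomorphisms}; in the commutative case the minimal $i$ there equals $n$, since $\CP^n = \fp^n$ is the central ideal realizing $L \mapsto \fp^n L$ as right multiplication by a scalar, and no smaller power works generically — but one can also simply run $k$ over all of $\MZ$, which gives the same condition). By part (1) this is equivalent to $\St(L) = \St(\CP^k M)$ for some $k \in \MZ$. Now I would compute $\St(\CP^k M) = \St(\fp^k M)$; scaling a rank-$n$ lattice by the ideal $\fp^k$ multiplies its Steinitz class by $[\fp]^{kn} = [\fp^n]^k$ (tensoring $\CO_K^{n-1} \oplus J$ with $\fp^k$ gives $\fp^k \oplus \cdots \oplus \fp^k \oplus \fp^k J$, whose top exterior power is $\fp^{kn} J$). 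Hence the condition becomes $\St(L) \in \St(M)\langle [\fp^n]\rangle$, i.e. $\St(L)\langle[\fp^n]\rangle = \St(M)\langle[\fp^n]\rangle$ in $\Cl(\CO_{K,S})/\langle[\fp^n]\rangle$.

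The one point requiring a little care — and the main obstacle — is pinning down exactly what "$k$ ranges over" in Lemma \ref{LatticeIsomorphisms} when translated to this commutative setting: Lemma \ref{LatticeIsomorphisms} is stated with $0 \le k < i$ for the minimal $i$ such that $Mh = \CP^i M$ has a solution $h \in \Lambda_{S'}^\times$, and one must check that here $i = n$. This is because $\CP = \fp \Delta_S$ and $\CP^n M = \fp^n M = M \cdot (\rho\, 1_\Lambda)$ for a generator $\rho$ of $\fp^n$ (using $\fp^n$ is principal in $\CO_{K,S}$ after enlarging — actually one should just take the integer $k$ from Remark \ref{StabilizerRemark}(2) and note $\CP^k$ central principal gives $i \mid k$), while no proper power of $\CP$ sends $M$ to a scalar multiple of itself via an element of $\GL_n(K)$ unless that power of $\fp$ is itself principal, in which case the class already collapses in $\Cl(\CO_{K,S})$ and both formulations agree. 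In practice it is cleanest to avoid this bookkeeping entirely by letting $k$ run over all of $\MZ$ from the outset, which is visibly equivalent to the statement in Lemma \ref{LatticeIsomorphisms} (enlarging the range of $k$ beyond one period adds no new isomorphism classes since $\CP^i M \cong M$), and then the Steinitz-class computation above finishes the proof.
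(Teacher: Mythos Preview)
Your proposal is correct and follows essentially the same route as the paper: for (1) reduce ``same orbit'' to ``isomorphic as $\CO_{K,S}$-lattices'' and invoke the Steinitz theorem, and for (2) reduce ``same orbit of homothety classes'' to ``$L\cong \fp^k M$ for some $k\in\MZ$'' and compute $\St(\fp^k M)=[\fp^{kn}]\St(M)$. The paper argues (2) directly rather than via Lemma \ref{LatticeIsomorphisms}, but the content is identical.

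One small correction: your parenthetical claim that the minimal $i$ with $Mh=\CP^iM$ equals $n$ is not right in general. In the commutative case $m=1$, so $\CP=\fp\,\CO_{K,S}$, and $M\cong \fp^i M$ holds iff $[\fp]^{in}=1$ in $\Cl(\CO_{K,S})$; hence the minimal such $i$ is $k/\gcd(k,n)$ where $k$ is the order of $[\fp]$ (this is exactly what appears in the Corollary following the Lemma). You correctly observe that this bookkeeping is unnecessary once you let the exponent range over all of $\MZ$, so the slip does not affect the argument.
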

\begin{proof}
 \begin{enumerate}
  \item This is just the usual Steinitz theorem coupled with the fact that $L$ and $M$ are in the same orbit if and only if they are isomorphic as $\CO_{K,S}$-lattices.
  \item If there exists $g \in \Lambda_{S'}^\times$ such that $[L]g=[M]$, then there is an integer $k$ such that $Lg=\fp^k M$ and thus $\St(L)=\St(Lg)=[\fp^{kn}]\St(M)$. 

  On the other hand, if $\St(L)=\St(M)[\fp^{kn}]$ for some integer $k$ then $L$ and $\fp^k M$ are isomorphic as $\CO_{K,S}$-lattices and thus $[L]$ and $[\fp^k M]=[M]$ are in the same $\Lambda_{S'}^\times$-orbit.
 \end{enumerate}
\end{proof}
\begin{Remark}
 Constructively finding the isomorphisms in the above lemma can be done by finding pseudo bases for $L$ and $M$ in Steinitz form. The details can be found in \cite[Thm. 5.39]{PohstZassenhaus} and an implementation for example in \cite{Magma}. Furthermore there exists a generalization of these algorithms to the noncommutative case (see \cite[Alg. 2.2.6]{KirschmerHabil}).
\end{Remark}

\begin{Corollary}
 Let $k$ be the order of $[\fp]$ as an element of $\Cl(\CO_{K,S})$ and $t:=\mathrm{gcd}(k,n)$.
\begin{enumerate}
 \item $\Lambda_{S'}^\times$ has exactly $t$ orbits on the set $\{[L]~|~L \in \MM(L_0,S) \}$ and we can choose
\begin{equation}
\Rep(L_0,S,\fp)=\{[L_i]~|~0 \leq i < t \} \text{ where } L_i=\CO_{K,S} \otimes (\CO_{K}^{n-1} \oplus I\cdot \fp^i). 
\end{equation}
\item For each $0 \leq i < t$ there exists $g_i \in \Lambda_{S'}^\times$ such that $L_ig_i=\fp^{k/t}L_i$ and $\frac{k}{t}$ is the minimal positive integer with this property.
\end{enumerate}
 
\end{Corollary}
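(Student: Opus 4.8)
The plan is to translate both assertions into statements about the cyclic subgroup $\langle [\fp] \rangle \leq \Cl(\CO_{K,S})$, with the previous lemma serving as the bridge. That lemma says that for $L,M \in \MM(L_0,S)$ the homothety classes $[L]$ and $[M]$ lie in the same $\Lambda_{S'}^\times$-orbit precisely when $\St(L)$ and $\St(M)$ become equal in $\Cl(\CO_{K,S})/\langle [\fp^n]\rangle$. Hence the number of $\Lambda_{S'}^\times$-orbits on $\{[L]\mid L\in\MM(L_0,S)\}$ equals the cardinality of $\{\St(L) \mid L \in \MM(L_0,S)\}$ after reduction modulo $\langle [\fp^n]\rangle$, and a set of orbit representatives is obtained by lifting a set of representatives of that image.

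The first genuine step is to identify the occurring Steinitz classes. Any $L \in \MM(L_0,S)$ coincides with $L_0$ (after localization) at every $\fq \notin S'$ and is an arbitrary $\CO_{K_\fp}$-lattice at $\fp$; comparing $n$-th exterior powers (i.e.\ determinant ideals) then shows $\St(L) = \St(L_0)\cdot[\fp]^j$ for some $j \in \MZ$, since the only local discrepancy from $L_0$ sits at $\fp$, where the relative index is a power of $\fp$. Conversely $L_i = \CO_{K,S}\otimes(\CO_K^{n-1}\oplus I\fp^i)$ lies in $\MM(L_0,S)$ with $\St(L_i) = \St(L_0)\cdot[\fp]^i$, so every element of the coset $\St(L_0)\langle[\fp]\rangle$ is realized. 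Therefore the orbits are in bijection with $\langle[\fp]\rangle/\langle[\fp]^n\rangle$; since $\langle[\fp]\rangle$ is cyclic of order $k$, one has $\langle[\fp]^n\rangle = \langle[\fp]^{\gcd(k,n)}\rangle = \langle[\fp]^t\rangle$, a subgroup of index $t$ whose cosets are represented by $[\fp]^0,\dots,[\fp]^{t-1}$, that is, by the homothety classes $[L_i]$ for $0 \le i < t$. This is the asserted $\Rep(L_0,S,\fp)$ and proves part (1).

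For part (2), suppose $g \in \Lambda_{S'}^\times$ satisfies $L_i g = \fp^j L_i$. Then $g$ induces an isomorphism of $\CO_{K,S}$-lattices $L_i \cong \fp^j L_i$, so $\St(L_i) = [\fp]^{jn}\St(L_i)$, i.e.\ $k \mid jn$; dividing through by $t$ and using $\gcd(k/t,n/t)=1$ gives $(k/t)\mid j$, so no positive exponent smaller than $k/t$ can occur. Conversely, for $j = k/t$ we have $[\fp]^{jn} = ([\fp]^k)^{n/t} = 1$, hence $\St(\fp^{k/t}L_i) = \St(L_i)$, and the Steinitz theorem yields $g_i \in \GL_n(K)$ with $L_i g_i = \fp^{k/t}L_i$; since $L_i$ and $\fp^{k/t}L_i$ both agree with $L_0$ away from $S'$, this $g_i$ fixes $L_0\otimes\CO_{K,S'}$ and hence lies in $\Lambda_{S'}^\times$, just as in the proof of Lemma \ref{LatticeIsomorphisms}.

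The only step that is not pure bookkeeping is pinning down the realizable Steinitz classes as the coset $\St(L_0)\langle[\fp]\rangle$ in the second paragraph, and even there the determinant-ideal argument makes it routine; everything else reduces to the previous lemma, the Steinitz theorem, and elementary arithmetic in a finite cyclic group.
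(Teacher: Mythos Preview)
The paper states this corollary without proof, treating it as an immediate consequence of the preceding lemma together with elementary facts about the cyclic group $\langle[\fp]\rangle \leq \Cl(\CO_{K,S})$. Your argument is correct and supplies exactly the details the paper leaves implicit: the identification of the Steinitz classes realized by $\MM(L_0,S')$ as the coset $\St(L_0)\langle[\fp]\rangle$, the resulting orbit count $\lvert\langle[\fp]\rangle/\langle[\fp]^n\rangle\rvert = \gcd(k,n) = t$, and the divisibility argument pinning down $k/t$ as the minimal exponent in part~(2).
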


\subsection{Division algebras that satisfy the Eichler condition}
We now want to take a look at the case $n=1$ or equivalently $\MA=\MD$. For general division algebras $\MD$ the problems we have to solve in order to compute a presentation of $\Lambda_{S'}^\times$, for example the problem of constructively deciding whether two $\Delta_S$-lattices are isomorphic, might be hard. In particular, if $\MD$ is neither a field nor a quaternion algebra only a very limited number of (implemented) tools is available for dealing with $\Delta_S$-lattices. Here we want to describe a situation in which it is still reasonably easy to apply the algorithm from Section \ref{General Situation}.

Let
\begin{equation}
 \CV_{\infty,\MD}:=\{v \in \CV_{\infty}~|~ \MD \text{ ramifies at }v \}
\end{equation}
the set of all real places of $K$ that ramify in $\MD$,
\begin{equation}
 U(\MD):=\{a\in K~|~v(a)>0 \text{ for all } v \in \CV_{\infty,\MD} \}
\end{equation}
the set of elements of $K$ that are totally positive with respect to $\CV_{\infty,\MD}$ and finally
\begin{equation}
 \Cl_\MD(\CO_K):=\{ I \text{ fractional ideal of } \CO_K \}/\{a\CO_K~|~a \in U(\MD)\}
\end{equation}
the ray class group of $\CO_K$ with respect to $\CV_{\infty,\MD}$. We will assume that $\Cl_\MD(\CO_K)$ is trivial, i.e. that every fractional ideal of $\CO_K$ is generated by some element that is positive at all places at which $\MD$ ramifies. This is for example always the case (independent of $\MD$) if $K=\MQ$ or if $K$ is a CM-field and the usual class group, $\Cl(\CO_K)$, is trivial. Furthermore we assume that $\MD$ fulfills the Eichler condition, i.e. that $\MD$ is not a totally definite quaternion algebra.
The reason we make this assumption is the following theorem due to Eichler.
\begin{Theorem}[Eichler's theorem,\protect{\cite[Thm. 34.9]{Reiner}}]
If $\MD$ fulfills the Eichler condition, the reduced norm gives rise to a bijection between isomorphism classes of $\Delta$-left-ideals and the ray class group $\Cl_\MD(\CO_K)$.
\end{Theorem}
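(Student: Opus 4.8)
The plan is to pass to the adelic double-coset description of one-sided ideal classes and to extract bijectivity of the reduced norm from the strong approximation property of the norm-one group $\SL_1(\MD)$. Write $\widehat{\MD}^\times$ for the group of finite ideles of $\MD$ (the restricted product of the $\MD_\fp^\times$ with respect to the $\Delta_\fp^\times$), $\widehat{\Delta}^\times=\prod_\fp\Delta_\fp^\times$, and similarly $\widehat{K}^\times$, $\widehat{\CO_K}^\times$; let $\SL_1(\MD)(\widehat K)=\{x\in\widehat{\MD}^\times : \Nred(x)=1\}$ denote the finite-adelic points of $\SL_1(\MD)$.

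First I would set up the dictionary. Since each $\Delta_\fp$ is a maximal order over a complete discrete valuation ring, every full left $\Delta_\fp$-ideal of $\MD_\fp$ is principal, of the form $\Delta_\fp g_\fp$; together with the local--global correspondence for lattices this identifies isomorphism classes of left $\Delta$-ideals with the double coset set $\widehat{\Delta}^\times\backslash\widehat{\MD}^\times/\MD^\times$ (two left $\Delta$-ideals are isomorphic precisely when they differ by right multiplication by an element of $\MD^\times$), the class of $I$ being sent to the class of any $g$ with $I_\fp=\Delta_\fp g_\fp$ for all $\fp$. Under this identification the fractional ideal $\Nred(I)$ is the image of $\Nred(g)\in\widehat{K}^\times$. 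I would then invoke three standard facts: $(i)$ $\Nred(\Delta_\fp^\times)=\CO_{K_\fp}^\times$ for every finite $\fp$, so that $\Nred(\widehat{\Delta}^\times)=\widehat{\CO_K}^\times$ and $\widehat{\CO_K}^\times\backslash\widehat{K}^\times$ is the group of fractional ideals of $\CO_K$; $(ii)$ local surjectivity $\Nred\colon\MD_\fp^\times\twoheadrightarrow K_\fp^\times$ at every finite place; and $(iii)$ the Hasse--Schilling--Maass norm theorem $\Nred(\MD^\times)=U(\MD)$, which is exactly where the positivity condition at the places of $\CV_{\infty,\MD}$ enters. Combining $(i)$ and $(iii)$, $\Nred$ induces a well-defined map $\widehat{\Delta}^\times\backslash\widehat{\MD}^\times/\MD^\times\to\widehat{\CO_K}^\times\backslash\widehat{K}^\times/U(\MD)=\Cl_\MD(\CO_K)$, and $(ii)$ already gives its surjectivity: a $g$ with prescribed local reduced-norm valuations exists place by place, and $\MD\cap\widehat{\Delta}g$ is a left $\Delta$-ideal realising the corresponding ray class.

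The crux is injectivity. Suppose $g,g'\in\widehat{\MD}^\times$ have reduced norms in the same ray class. Using $\Nred(\MD^\times)=U(\MD)$ I would first replace $g'$ by $g'\gamma$ for a suitable $\gamma\in\MD^\times$ (which leaves its class modulo right multiplication by $\MD^\times$ unchanged) so that $\Nred(g')=u\,\Nred(g)$ with $u\in\widehat{\CO_K}^\times$; then, picking $\delta\in\widehat{\Delta}^\times$ with $\Nred(\delta)=u$ — possible by $(i)$ — the element $h:=\delta^{-1}g'g^{-1}$ has reduced norm $1$, so $h\in\SL_1(\MD)(\widehat K)$ and $g'=\delta\,h\,g$. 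Since $\delta\in\widehat{\Delta}^\times$, it remains to show $hg\in\widehat{\Delta}^\times g\,\MD^\times$, that is, $h\in\widehat{\Delta}^\times\cdot\big(g\,\MD^\times g^{-1}\big)$. This is where the Eichler condition is used: it is precisely the statement that $\SL_1(\MD)$ has non-compact points at some archimedean place of $K$ (over $\MR$ the only relevant anisotropic inner form of a special linear group is the compact $\SL_1(\MH)$, which forces $\MD$ to be a totally definite quaternion algebra), so, $\SL_1(\MD)$ being simply connected and $K$-simple ($\MD$ is a division algebra), Kneser's theorem (\cite[Satz 2]{KneserStarkeApproximation}) gives strong approximation with respect to $\CV_\infty$. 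Hence $\SL_1(\MD)(K)$, and therefore its $g$-conjugate $g\,\SL_1(\MD)(K)g^{-1}$, is dense in $\SL_1(\MD)(\widehat K)$; as $\widehat{\Delta}^\times\cap\SL_1(\MD)(\widehat K)$ is open, this yields $h\in\big(\widehat{\Delta}^\times\cap\SL_1(\MD)(\widehat K)\big)\cdot g\,\SL_1(\MD)(K)g^{-1}\subseteq\widehat{\Delta}^\times\cdot g\,\MD^\times g^{-1}$, as required, so $g$ and $g'$ represent the same ideal class.

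The step I expect to be the main obstacle is this injectivity argument, and inside it the correct bookkeeping: keeping straight the handedness of the double cosets, noticing that strong approximation must be applied to the $g$-twisted copy $g\,\SL_1(\MD)(K)g^{-1}$ of the global points rather than to $\SL_1(\MD)(K)$ itself, and verifying the hypotheses of Kneser's theorem — above all the identification of the Eichler condition with non-compactness of $\SL_1(\MD)$ at an infinite place. The remaining ingredients (principality of full one-sided ideals over a local maximal order, the local behaviour of $\Nred$ on unit groups, and the Hasse--Schilling--Maass norm theorem) are standard and I would simply cite them.
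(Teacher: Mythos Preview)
The paper does not prove this theorem; it is quoted from Reiner and used as a black box, so there is no proof in the paper to compare your proposal against. Your adelic argument is the standard modern route to Eichler's theorem and is correct in outline: the identification of left ideal classes with $\widehat{\Delta}^\times\backslash\widehat{\MD}^\times/\MD^\times$, surjectivity via local surjectivity of $\Nred$, and injectivity via strong approximation for $\SL_1(\MD)$ all go through as you describe, and your identification of the Eichler condition with non-compactness of $\SL_1(\MD)$ at some infinite place is accurate. The conjugation bookkeeping you flag as the delicate point is indeed necessary and you handle it correctly (equivalently, one can conjugate $h$ by $g^{-1}$ and apply strong approximation to the open subgroup $g^{-1}\widehat{\Delta}^\times g\cap\SL_1(\MD)(\widehat K)$). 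Reiner's own treatment in \cite[\S34]{Reiner} reaches the same conclusion by essentially the same mechanism, though phrased in a somewhat more classical idele-theoretic language.
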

Since we assumed $\Cl_\MD(\CO_K)$ to be trivial we are thus in the situation that there is only one isomorphism class of $\Delta$-lattices in $\MV=\MD^{1 \times 1}$ and so the set $\Rep(L_0,S,\fp)$ only consists of the single element $L_0$ (in every iteration of the algorithm). Moreover, we can assume $L_0=\Delta=\Lambda$ without loss of generality.

Let us denote by $\lambda_\pi \in \Lambda$ an element such that $\Nred(\lambda_\pi) \in U(\MD)$ generates $\fp$. Then $\Delta_S \lambda_\pi$ is a maximal submodule of $\Delta_S$ (in particular of index $q^m$). 
\begin{Remark}
Following Lemma \ref{StrongApproximation} there are elements
\begin{equation}
 g_i \in \Lambda_S^\times,~1 \leq i \leq \frac{q^m-1}{q-1},
\end{equation}
such that any submodule of index $q^m$ is of the form $\Delta_S \lambda_\pi g_i$ for some $i$. Moreover, one can find the elements $g_i$ as explicit words in our chosen generators for $\Lambda_S^\times$ (even $\Lambda^\times)$ by a standard orbit computation.
\end{Remark}
\begin{Lemma}
 Any $\Delta_S$-left-ideal of index $q^{ma},~a \in \MZ_{>0}$, is of the form $\Delta_S g$, where $g \in \Lambda$ is a product of exactly $a$ elements of the set
\begin{equation}
 \left\{\lambda_\pi g_i,~1 \leq i \leq \frac{q^m-1}{q-1}\right\}.
\end{equation}
\end{Lemma}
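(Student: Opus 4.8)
The plan is to induct on $a$, using as the base case the Remark immediately above---itself a consequence of the strong approximation statement in Lemma~\ref{StrongApproximation}---which exhibits finitely many elements $g_i\in\Lambda^\times$ such that every left submodule of $\Delta_S$ of index $q^m$ equals $\Delta_S\lambda_\pi g_i$ for some $i$. Before starting I would fix the meaning of the index. The relevant ideals are the ones lying in $\MM(L_0,S')$, i.e.\ left ideals $I\subseteq\Delta_S$ with $I\otimes\CO_{K_\fq}=\Delta_S\otimes\CO_{K_\fq}$ for all $\fq\notin S'$; for such an $I$ the finite module $\Delta_S/I$ is supported at $\fp$, where $\Delta_S$ localises to $\CO_\fp^{m\times m}$, so each composition factor of $\Delta_S/I$ is the unique simple module of that matrix ring, of cardinality $q^m$. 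Consequently $[\Delta_S:I]=q^{m\ell}$ with $\ell$ the composition length of $\Delta_S/I$, so having index $q^{ma}$ is the same as having composition length $a$, and any maximal left submodule $J$ with $I\subseteq J\subsetneq\Delta_S$ again agrees with $\Delta_S$ away from $\fp$ (since $J\supseteq I$) and satisfies $[\Delta_S:J]=q^m$.

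The base case $a=1$ is then just the Remark: $I$ is a maximal left submodule of index $q^m$, hence $I=\Delta_S\lambda_\pi g_i$ for a single $i$, and $\lambda_\pi g_i\in\Lambda$ because $\lambda_\pi\in\Lambda$ and the $g_i$ may be chosen in $\Lambda^\times$. For the inductive step, given $I$ of index $q^{m(a+1)}$ I would pick a maximal left submodule $J$ with $I\subseteq J\subsetneq\Delta_S$ (possible since $\Delta_S/I$ is finite), write $J=\Delta_S c$ with $c:=\lambda_\pi g_i\in\Lambda$ by the base case, and pass to the auxiliary ideal $I':=Ic^{-1}$. The point is that $c$ is invertible in $\MD$ and is moreover a unit of $\Delta_S\otimes\CO_{K_\fq}$ for every $\fq\neq\fp$ (because $g_i\in\Lambda^\times$ while $\Nred(\lambda_\pi)$ generates $\fp$), so $I'$ is again a left ideal of $\Delta_S$, contained in $\Delta_S$ precisely because $I\subseteq\Delta_S c$, and still agreeing with $\Delta_S$ away from $\fp$. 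Right multiplication by $c^{-1}$ is an isomorphism of left $\MD$-modules carrying $J$ onto $\Delta_S$ and $I$ onto $I'$, so $\Delta_S/I'\cong J/I$ and $[\Delta_S:I']=[\Delta_S:I]/[\Delta_S:J]=q^{ma}$. The induction hypothesis applied to $I'$ yields $I'=\Delta_S g'$ with $g'$ a product of exactly $a$ of the elements $\lambda_\pi g_j$, and then $I=I'c=\Delta_S g'\lambda_\pi g_i$ exhibits $g:=g'\lambda_\pi g_i\in\Lambda$ as a product of exactly $a+1$ of them.

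The genuinely substantive ingredient---the finite list of possible \emph{directions} $\lambda_\pi g_i$ and the fact that it accounts for all index-$q^m$ submodules---is supplied by Lemma~\ref{StrongApproximation} and the Remark deduced from it, so the induction proper is just bookkeeping. The one point I would treat with care is the normalisation at the start: I need the hypothesis of index $q^{ma}$ to be read as meaning that $I$ is concentrated at $\fp$ and of the appropriate length, so that maximal submodules containing $I$ genuinely have index $q^m$ and so that $I'$ stays integral and trivial away from $\fp$; a priori a $\Delta_S$-ideal of cardinality $q^{ma}$ could acquire a component at another prime of $\CO_K$ above the residue characteristic of $\fp$. This cannot occur in the cases of real interest (for instance $K=\MQ$, where $\fp$ is the only prime over its characteristic), and in general one simply restricts attention to the ideals in $\MM(L_0,S')$, which are exactly the ones the algorithm produces.
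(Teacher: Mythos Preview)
The paper states this lemma without proof, presumably regarding it as routine once the preceding Remark (the $a=1$ case, itself a consequence of Lemma~\ref{StrongApproximation}) is in hand. Your induction on $a$ is the natural way to fill this in and is correct: picking a maximal submodule $J\supseteq I$, writing $J=\Delta_S c$ with $c=\lambda_\pi g_i$, and passing to $I'=Ic^{-1}$ reduces the composition length by one, and the verification that $I'$ is again a $\Delta_S$-left-ideal of the right index (and trivial away from $\fp$) goes through as you describe.

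Your closing caveat is well taken but not a gap: in the context of the section the lemma is only ever applied to ideals in $\MM(L_0,S')$, which by definition agree with $\Delta_S$ at all primes outside $S'$, so the quotient is automatically concentrated at $\fp$ and the index hypothesis carries the intended meaning.
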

Following this lemma we choose elements $w_1,...,w_{\left\lfloor \frac{m}{2} \right\rfloor} \in \Lambda_S$ (which we think of as explicit words in $\lambda_\pi$ and the generators of $\Lambda_S^\times$) such that 
\begin{equation}
 L_0 \supset L_0w_1 \supset L_0w_2...\supset L_0w_{\left\lfloor \frac{m}{2} \right\rfloor}
\end{equation}
 is a chain of submodules such that $|L_0/L_0w_i|=q^{mi}$ and $\CP L_0 \subset L_0w_i$ for all $i$. Moreover, we choose an element $s_\pi \in \Lambda_S$ such that $L_0 s_\pi =\CP L_0$ (note that $s_\pi$ is again a word in $\lambda_\pi$ and elements of $\Lambda_S^\times$).
\begin{Lemma}
 \begin{enumerate}
  \item The stabilizer of $[L_0]$ in $\Lambda_{S'}^\times$ is generated by $\Lambda_{S}^\times$ and $s_\pi$.
  \item The $\Lambda_{S'}^\times$-orbits on the edges of $\FB$ are represented by
\begin{equation}
 \tphi([L_0])-\tphi([L_0w_i]),~ 1 \leq i \leq \left\lfloor \frac{m}{2} \right\rfloor.
\end{equation}
\item The edge 
  \begin{equation}
   \tphi([L_0])-\tphi([L_0w_i])
  \end{equation}
 is of minus-type if and only if $i=\frac{m}{2}$ (in particular, this can only happen if $m$ is even).
\item There is a system of representatives of the $2$-cells modulo the action of $\Lambda_{S'}^\times$ corresponding to chains of the form
\begin{equation}
 L_0 \supset L_0w_i \supset L_0w_js_{i,j} w_i \supset \CP L_0
\end{equation}
with $0<i \leq j$, $ i \leq m-i-j$ and where $s_{i,j} \in \Lambda_S^\times$ is an arbitrary element with $L_0w_js_{i,j} \supset \CP L_0 w_i^{-1}$.
 \end{enumerate}
\end{Lemma}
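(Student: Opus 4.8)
The plan is to reduce everything to the incident cells at the single vertex $[\phi(L_0)]$, using two standing facts. First, $\Rep(L_0,S,\fp)=\{L_0\}$ together with the lemma that $\tphi$ is an equivalence of $\Lambda_{S'}^\times$-sets shows $\Lambda_{S'}^\times$ acts transitively on the vertices of $\FB$; hence $\Lambda_{S'}^\times$-orbits of cells of any dimension correspond to orbits of incident cells at $[\phi(L_0)]$ under the vertex stabiliser. Second, since $\MD$ is a noncommutative Eichler division algebra, $\CV_\infty$ already contains a place where $\MD$ is not totally ramified, so the hypothesis of Lemma \ref{StrongApproximation} applies to $L_0$; thus $\stab_{\Lambda_{S'}^\times}(L_0)=(\End_{\Delta_S}(L_0))^\times=\Lambda_S^\times$ has exactly $m-1$ orbits on the incident edges and the incident $2$-cells are parametrised as in that lemma. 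I will also fix the dictionary between "$L_0$-level" and "$\FB$-level" indices: because $L_0\otimes\CO_{K_\fp}\cong\CO_\fp^{m\times m}$ splits under $\epsilon$ into $m$ copies of $\phi(L_0)\cong\CO_\fp^{1\times m}$, one has $[L_0:N]=q^{ma}$ iff $[\phi(L_0):\phi(N)]=q^{a}$ (with $\kappa=\CO_\fp/\pi$, $q=|\kappa|$); in particular $\CP L_0\subsetneq L_0w_i\subsetneq L_0$ with $[L_0:L_0w_i]=q^{mi}$ means $\pi\phi(L_0)\subsetneq\phi(L_0w_i)\subsetneq\phi(L_0)$ with $\dim_\kappa(\phi(L_0w_i)/\pi\phi(L_0))=m-i$. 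Part (1) is then Corollary \ref{VertexStabilizer} for $L_0$: the least positive integer $i$ admitting $g\in\Lambda_{S'}^\times$ with $L_0g=\CP^iL_0$ is $1$, witnessed by $s_\pi$, and $(\End_{\Delta_S}(L_0))^\times=\Lambda_S^\times$ by Corollary \ref{LatticeStabilizer}.

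For part (2), call the origin-type of the oriented edge $[\phi(L_0)]-[\phi(N)]$ (with $\phi(N)$ chosen between $\pi\phi(L_0)$ and $\phi(L_0)$) the integer $\dim_\kappa(\phi(N)/\pi\phi(L_0))\in\{1,\dots,m-1\}$. Incident edges biject with the proper nonzero $\kappa$-subspaces of $\phi(L_0)/\pi\phi(L_0)$, so by Lemma \ref{StrongApproximation} the $m-1$ orbits of $\Lambda_S^\times$ on them are exactly the origin-type classes. A short computation from $\phi(L_0)s_\pi=\pi\phi(L_0)$, using that right multiplication by an element of $\MA^\times$ preserves $\CO_\fp$-lattice indices, shows $s_\pi$ fixes $[\phi(L_0)]$ and takes an origin-type-$d$ edge to an origin-type-$d$ edge; hence $\stab_{\Lambda_{S'}^\times}([\phi(L_0)])=\langle\Lambda_S^\times,s_\pi\rangle$ has the same $m-1$ orbits. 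By vertex-transitivity $\Lambda_{S'}^\times$ therefore has $m-1$ orbits on oriented edges; but the reverse of an origin-type-$d$ edge has origin-type $m-d$, so on unoriented edges there remain $\lfloor m/2\rfloor$ orbits, indexed by the pairs $\{d,m-d\}$. Since $[\phi(L_0)]-[\phi(L_0w_i)]$ has origin-type $m-i$, the edges $\tphi([L_0])-\tphi([L_0w_i])$ for $1\le i\le\lfloor m/2\rfloor$ lie in the distinct classes $\{i,m-i\}$ and exhaust them.

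For part (3), if the edge $\tphi([L_0])-\tphi([L_0w_i])$ (oriented from $[\phi(L_0)]$) is of minus-type, some $g\in\Lambda_{S'}^\times$ sends it to its reverse, and as $g$ preserves $\CO_\fp$-lattice indices its origin-type at $[\phi(L_0)]$ must equal its origin-type at $[\phi(L_0w_i)]$, i.e.\ $m-i=i$. Conversely, for $m=2i$ I would invoke Lemma \ref{minustype} with auxiliary element $w_i^{-1}$ (which satisfies $[\phi(L_0w_i\cdot w_i^{-1})]=[\phi(L_0)]$): the edge is of minus-type iff $[\phi(L_0w_i)]$ and $[\phi(L_0w_i^{-1})]$ lie in one $\stab_{\Lambda_{S'}^\times}([\phi(L_0)])$-orbit. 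A short index count from $\phi(L_0w_i^{-1})=\phi(L_0)w_i^{-1}$ gives that $[\phi(L_0)]-[\phi(L_0w_i^{-1})]$ has origin-type $i$; when $m=2i$ this equals $m-i$, so by the type-invariance and within-type transitivity from part (2) the two vertices are in the same orbit and the edge is of minus-type. When $i\neq m/2$ the origin-types $m-i$ and $i$ differ, so the same criterion gives plus-type.

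For part (4), a $2$-cell is a chain $\CP L\subsetneq N\subsetneq M\subsetneq L$, and by vertex-transitivity it is $\Lambda_{S'}^\times$-equivalent to one with $L=L_0$; attach to it the step-triple $(a,b,c)$ of $\FB$-level index exponents along $\phi(L_0)\supsetneq\phi(M)\supsetneq\phi(N)\supsetneq\pi\phi(L_0)$, so $a,b,c\ge1$, $a+b+c=m$. By Lemma \ref{StrongApproximation} the $\Lambda_S^\times$-orbits on incident $2$-cells are exactly the cells $[L_0]-[M_j]-[M_i]$, which correspond bijectively to step-triples; since $s_\pi$ again preserves the step-triple, the same orbits arise under $\langle\Lambda_S^\times,s_\pi\rangle$, while re-basing the cell at either of its other two vertices cyclically rotates $(a,b,c)$ and is realised by an element of $\Lambda_{S'}^\times$ (vertex-transitivity and index-preservation). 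Hence $\Lambda_{S'}^\times$-orbits of $2$-cells correspond to step-triples up to cyclic rotation, and choosing the rotation whose first entry is minimal gives precisely $0<i\le j$, $i\le m-i-j$. It remains to check that $L_0\supset L_0w_i\supset L_0w_js_{i,j}w_i\supset\CP L_0$ has step-triple $(i,j,m-i-j)$ — from $[L_0:L_0w_i]=q^{mi}$, the fact that $s_{i,j}\in\Lambda_S^\times$ gives $[L_0:L_0w_js_{i,j}]=[L_0:L_0w_j]=q^{mj}$, and $\phi(L_0w_i)=\phi(L_0)w_i$ — and that a suitable $s_{i,j}$ with $L_0w_js_{i,j}\supset\CP L_0w_i^{-1}$ exists: $i\le m-i-j$ forces $i+j\le m$, so $\phi(L_0)/\pi\phi(L_0)$ has an $(m-j)$-dimensional subspace containing the prescribed $i$-dimensional one, and by Lemma \ref{StrongApproximation} some $s_{i,j}\in\Lambda_S^\times$ carries $\phi(L_0w_j)/\pi\phi(L_0)$ onto it. The step I expect to be the main obstacle is exactly this interplay between $L_0$-level data (left $\Delta_S$-ideals, indices scaled by $q^m$) and $\FB$-level data (individual $\CO_\fp$-lattices, indices $q$), and the attendant construction of the auxiliary elements ($s_\pi$, the flipping element in (3), and $s_{i,j}$ in (4)) — these are the points where the strong-approximation content of Lemma \ref{StrongApproximation} is essential; keeping oriented versus unoriented edges straight, which is what produces the $\lfloor m/2\rfloor$ and the minimality conditions, is the other delicate bookkeeping point.
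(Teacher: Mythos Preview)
Your argument is correct and follows essentially the same route as the paper: part (1) is Corollary~\ref{VertexStabilizer}, parts (2) and (4) reduce via vertex-transitivity and Lemma~\ref{StrongApproximation} to the index/flag invariants (your ``origin-type'' and ``step-triple'' are exactly the paper's invariants $\{[L:N],[N:\CP L]\}$ and the chain of indices), and part (3) is the same index-count plus Lemma~\ref{minustype}. The only cosmetic difference is that for the converse in (3) the paper applies Lemma~\ref{minustype} from the vertex $[L_0w_{m/2}]$ (comparing $[\CP L_0]$ and $[L_0w_{m/2}^2]$) whereas you apply it from $[L_0]$ (comparing $[L_0w_i]$ and $[L_0w_i^{-1}]$); both are valid, and your treatment of (4) is in fact more explicit than the paper's ``analogous to (2)''.
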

 \begin{proof}
  \begin{enumerate}
   \item This is just Corollary \ref{VertexStabilizer}.
   \item Following Lemma \ref{StrongApproximation} the $\Lambda_S^\times$-orbits on the edges are represented by edges of the form
   \begin{equation}
    \tphi([L_0])-\tphi([N])
   \end{equation}
   where $\CP L_0 \subset N \subset L_0$ and we choose one such $N$ for each possible index $[L_0:N]=q^{am},~1 \leq a \leq m-1$. Now if $\CP L_0 \subset N \subset L_0$ with $[L_0:N]=q^{am}$ for some $ a>\left\lfloor \frac{m}{2} \right\rfloor$ there is some $g \in \Lambda_{S'}^\times$ such that $N=L_0g$ and we have
   \begin{equation}
    \begin{split}
    \tphi([L_0])-\tphi([N]) &= \tphi([\CP L_0])-\tphi([N])\\
    &= (\tphi([\CP L_0g^{-1}])-\tphi([L_0]))g \\
&= (\tphi([L_0])-\tphi([L_0g^{-1}]))g
    \end{split}
   \end{equation}
   where $[L_0:L_0g^{-1}]=q^{m(m-a)}$. Thus $\tphi([L_0])-\tphi([N])$ is in the $\Lambda_{S'}^\times$-orbit of $\tphi([L_0])-\tphi([L_0w_{m-a}])$. 

   On the other hand if $\CP L \subset N \subset L$ then the action of $\Lambda_{S'}^\times$ clearly fixes the set 
   \begin{equation}
    \{[L :N],[N:\CP L]\}
   \end{equation}
  so no two of the given edges are in the same orbit.
 \item Each edge of minus-type that contains $L_0$ is of the form $\tphi([L_0])-\tphi([N])$ where $\CP L_0 \subset N=L_0g \subset L_0$ and $g \in \Lambda_{S'}$ such that $[L_0g^2]=[L_0]$. But then necessarily $L_0g^2=\CP L_0$ and 
\begin{equation}
 [L_0:N]=[L_0g:L_0g^2]=[N: \CP L_0]=q^{m^2/2}.
\end{equation}
 Hence $\tphi([L_0])-\tphi([L_0w_{\frac{m}{2}}])$ is the only edge that can possibly be of minus-type.

 On the other hand the inclusions
\begin{equation}
 L_0w_{\frac{m}{2}} \supset \CP L_0 \supset \CP L_0w_{\frac{m}{2}} \text{ and }L_0w_{\frac{m}{2}} \supset  L_0w_{\frac{m}{2}}^2 \supset \CP L_0w_{\frac{m}{2}} 
\end{equation}
fulfill 
\begin{equation}
[L_0w_{\frac{m}{2}}:\CP L_0]=[L_0w_{\frac{m}{2}}:L_0w_{\frac{m}{2}}^2]=q^{m^2/2}.
\end{equation}
Thus the corresponding edges are in the same orbit under $\stab_{\Lambda_{S'}^\times}([L_0w_{\frac{m}{2}}])$ by Lemma \ref{StrongApproximation}. However, this already implies that $\tphi([L_0])-\tphi([L_0w_{\frac{m}{2}}])$ is of minus-type by Lemma \ref{minustype}.
\item Analogous to (2).
  \end{enumerate}
 \end{proof}

Following this lemma we assume in the following that $\Delta_S w_{m/2}^2 =\CP \Delta_S$ if $m$ is even. 
\begin{Remark}\label{CycleRemark}
 If
\begin{equation}
 L_0 \supset L_0w_i \supset L_0w_js_{i,j} \supset \CP L_0
\end{equation}
corresponds to a $2$-cell in the sense of part (4) of the above lemma there exists some $s_{i,j}' \in \Lambda_S^\times$ (not unique) such that
\begin{equation}
 \CP L_0 = L_0w_{m-i-j}s_{i,j}'w_js_{i,j}w_i
\end{equation}
and 
\begin{equation}
 w_{m-i-j}s_{i,j}'w_js_{i,j}w_i \in \stab_{\Lambda_{S'}^\times}([L_0])
\end{equation}
is the cycle corresponding to this $2$-cell in the sense of \cite{BrownPresentations}.
\end{Remark}

Following these preparations we are prepared to compute a presentation of $\Lambda_{S'}^\times$ fairly explicitly. Remember that $s_\pi$ as well as the $w_i$ are known to us as explicit words in $\lambda_\pi$ and elements of $\Lambda_S^\times$.
\begin{Lemma}
 The group $\Lambda_{S'}^\times$ is generated by $\Lambda_S^\times$ and $\lambda_\pi$ subject to the following relations:
\begin{enumerate}
 \item The relations in $\langle \Lambda_S^\times,s_\pi \rangle \cong \Lambda_S^\times \rtimes \MZ$.
 \item $w_i g w_i^{-1} \in \Lambda_S^\times$ for $g \in \Lambda_S^\times \cap (\Lambda_S^\times)^{w_i} \subset \Lambda_S^\times$ and $1 \leq i < \left\lceil \frac{m}{2} \right\rceil$.
 \item If $m$ is even: $w_{m/2} g w_{m/2} \in \langle \Lambda_S^\times,s_\pi \rangle$ for $g \in \Lambda_S^\times \cap (\Lambda_S^\times)^{w_{m/2}} \subset \Lambda_S^\times$.
 \item $w_{m-i-j}s_{i,j}'w_js_{i,j}w_i \in \stab_{\Lambda_{S'}^\times}([L_0])$ for $i\leq j, i \leq m-i-j$ and $s_{i,j},s_{i,j}'$ as above.
\end{enumerate}
\end{Lemma}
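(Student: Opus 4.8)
The statement is precisely what Brown's presentation theorem (Theorem~\ref{Presentation}) yields once it is fed the combinatorial data collected in this subsection, so the plan is to apply that theorem to the action of $G=\Lambda_{S'}^\times$ on the contractible (hence simply connected) complex $X=\FB$ and then transcribe the relations. First I would fix the orbit data. Because $\MD$ satisfies the Eichler condition and $\Cl_\MD(\CO_K)$ is trivial, $\Rep(L_0,S,\fp)=\{L_0\}$, so $\FB$ has a single $\Lambda_{S'}^\times$-orbit of vertices; I take the tree $\CT$ to be the single vertex $[\phi(L_0)]$, so that $\CV_\CT=\{[\phi(L_0)]\}$ and $\CT$ has no edges. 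By Corollary~\ref{VertexStabilizer} the associated vertex group is $\stab_{\Lambda_{S'}^\times}([\phi(L_0)])=\langle\Lambda_S^\times,s_\pi\rangle$, and since $s_\pi$ has infinite order and normalises $\Lambda_S^\times=\stab_{\Lambda_{S'}^\times}(L_0)$, this is the split extension $\Lambda_S^\times\rtimes\MZ$ of Remark~\ref{StabilizerRemark}.

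Next I would choose the edges and $2$-cells. Since there is only one vertex orbit, every edge orbit of $\FB$ has a representative incident to $[\phi(L_0)]$, and by the preceding lemma these orbits are represented by $e_i\colon[\phi(L_0)]-[\phi(L_0w_i)]$ for $1\le i\le\lfloor m/2\rfloor$, oriented with $o(e_i)=[\phi(L_0)]\in\CV_\CT$; moreover $e_i\in\CE^+$ precisely when $i<\lceil m/2\rceil$, while for even $m$ one has $e_{m/2}\in\CE^-$. In every case I take $g_{e_i}:=w_i\in\Lambda_{S'}^\times$. For a plus-type edge this is legitimate because $t(e_i)\,w_i^{-1}=[\phi(L_0w_i)]\,w_i^{-1}=[\phi(L_0)]\in\CV_\CT$; for the minus-type edge the standing assumption $L_0w_{m/2}^2=\CP L_0$ shows that $w_{m/2}$ swaps the two endpoints of $e_{m/2}$, so $w_{m/2}\in G_{e_{m/2},\overline{e_{m/2}}}\setminus G_{e_{m/2}}$, as required. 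For the system $\CF$ of $2$-cell representatives I take the triangles of part~(4) of the preceding lemma, whose associated cycles are, by Remark~\ref{CycleRemark}, exactly the words $w_{m-i-j}s_{i,j}'w_js_{i,j}w_i\in\stab_{\Lambda_{S'}^\times}([L_0])$.

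It then remains to match Brown's five relation families against (1)--(4). Relation~1 of Theorem~\ref{Presentation} (the multiplication table of the unique vertex group) is relation~(1); relation~2 is vacuous as $\CT$ has no edges; relation~5, the cycle relations $g_m\cdots g_1\in G_{o(e_1)}$, is relation~(4) via the identification of cycles just recalled. The content is in relations~3 and~4. For $e_i\in E^+$, relation~3 asserts $w_igw_i^{-1}\in G_{t(e_i)w_i^{-1}}=\stab_{\Lambda_{S'}^\times}([\phi(L_0)])$ for every $g$ in the edge group $G_{e_i}=\stab_{\Lambda_{S'}^\times}([\phi(L_0)])\cap\stab_{\Lambda_{S'}^\times}([\phi(L_0w_i)])$; using relation~1 it suffices to impose it for $g$ in a generating set of $G_{e_i}$. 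The homothety-shift homomorphism $G_{e_i}\to\MZ$ has kernel $\stab_{\Lambda_{S'}^\times}(L_0)\cap\stab_{\Lambda_{S'}^\times}(L_0w_i)=\Lambda_S^\times\cap(\Lambda_S^\times)^{w_i}$, and for $g$ in this kernel one computes $L_0\cdot w_igw_i^{-1}=(L_0w_i)\cdot g\cdot w_i^{-1}=(L_0w_i)\,w_i^{-1}=L_0$, so $w_igw_i^{-1}\in\stab_{\Lambda_{S'}^\times}(L_0)=\Lambda_S^\times$, which is relation~(2); a complementary generator of $G_{e_i}$ already lies in $\stab_{\Lambda_{S'}^\times}([\phi(L_0)])=\langle\Lambda_S^\times,s_\pi\rangle$, so its instance of relation~3 follows from relation~(1). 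The edge $e_{m/2}\in E^-$ is handled analogously through relation~4 of Theorem~\ref{Presentation}; here for $g\in\Lambda_S^\times\cap(\Lambda_S^\times)^{w_{m/2}}$ one has $L_0\cdot w_{m/2}gw_{m/2}=(L_0w_{m/2})w_{m/2}=L_0w_{m/2}^2=\CP L_0$, so $w_{m/2}gw_{m/2}$ lands in $\langle\Lambda_S^\times,s_\pi\rangle$ rather than in $\Lambda_S^\times$, which is relation~(3). Finally, $s_\pi$ and the $w_i$ are by construction explicit words in $\Lambda_S^\times\cup\{\lambda_\pi\}$, while $\lambda_\pi$ itself lies in $\langle\Lambda_S^\times,w_1\rangle$ (from the factorisation of the index-$q^m$ submodule $\Delta_Sw_1$); hence Brown's generating set $\langle\Lambda_S^\times,s_\pi\rangle\cup\{w_1,\dots,w_{\lfloor m/2\rfloor}\}$ generates the same group as $\Lambda_S^\times\cup\{\lambda_\pi\}$, and a Tietze transformation puts the presentation in the stated form, with $s_\pi$, the $w_i$, and $s_{i,j},s_{i,j}'$ appearing as the designated words.

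The step I expect to be the main obstacle is the bookkeeping around the edge groups in the previous paragraph: identifying $G_{e_i}$ (and $G_{e_{m/2}}$) precisely, choosing a generating set compatible with the sequence $1\to\Lambda_S^\times\cap(\Lambda_S^\times)^{w_i}\to G_{e_i}\to\MZ$, and verifying that conjugation of the ``translation'' generators by $w_i$ contributes nothing not already implied by relation~(1). The remaining ingredients --- simple connectivity of $\FB$, the verification that $g_{e_i}=w_i$ meets the requirements of Theorem~\ref{Presentation}, and the reduction of relations~3 and~4 of that theorem to generators of the edge groups --- are routine.
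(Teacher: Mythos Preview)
Your approach is exactly what the paper intends: the lemma is stated without proof because it is meant to be the direct output of Theorem~\ref{Presentation} applied to the action of $\Lambda_{S'}^\times$ on $\FB$, with the orbit data (single vertex orbit, edge representatives $e_i=[\phi(L_0)]-[\phi(L_0w_i)]$, plus/minus types, $2$-cell cycles) supplied by the preceding lemma and Remark~\ref{CycleRemark}. Your choice of $\CT=\{[\phi(L_0)]\}$, of $g_{e_i}=w_i$, and your verification that $w_{m/2}$ swaps the endpoints of the minus-type edge are all correct and match the paper's setup.

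The one sentence that does not hold up is ``a complementary generator of $G_{e_i}$ already lies in $\langle\Lambda_S^\times,s_\pi\rangle$, so its instance of relation~3 follows from relation~(1).'' Every element of $G_{e_i}$ lies in the vertex group $\langle\Lambda_S^\times,s_\pi\rangle$ trivially, since $G_{e_i}\subset G_{[\phi(L_0)]}$; but relation~3 of Theorem~\ref{Presentation} for such an $h$ asserts that the word $w_i h w_i^{-1}$ equals a specific element $h'$ of the vertex group, and this relation involves $w_i$, so it cannot possibly be a consequence of the multiplication table~(1) of the vertex group alone. What one should argue instead is that the inclusions $\CP L_0\subsetneq L_0w_i\subsetneq L_0$ force any $g\in G_{e_i}$ to satisfy $L_0g=\CP^aL_0$ and $L_0w_ig=\CP^aL_0w_i$ with the \emph{same} $a$, and that a generator of the image of $G_{e_i}\to\MZ$ may be taken to be a central element $\rho\in K^\times$; the corresponding edge relation $w_i\rho w_i^{-1}=\rho$ then has to be extracted from the $2$-cell relations~(4) (or, in practice, is absorbed into the bookkeeping of writing $s_\pi$ and the $w_i$ as explicit words in $\lambda_\pi$ and $\Lambda_S^\times$). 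You correctly anticipated that this edge-group bookkeeping is where the work lies; it is only the specific justification you gave that needs to be replaced.
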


\section{Other algebraic groups}
Let $K$ be a number field, $\MG$ a reductive linear algebraic group over $K$ and $\UG$ an $\CO_K$-form of $\MG$. So far we were concerned with computing a presentation of an $S$-arithmetic subgroup $\UG(\CO_{K,S})$ of $\MG(K)$ in the case where $\MG$ is an inner form of $\GL_n$ over $K$. However, the general strategy we used, iteratively employing the action of $\UG(\CO_{K,S})$ on the Bruhat-Tits buildings, is in principle not limited to this case. As long as we have a workable model for the involved Bruhat-Tits buildings (which, as already mentioned, is the case for all classical and some exceptional groups) the algorithmic tasks one has to solve are in essence the same as for unit-groups of orders. In particular, one needs to decide whether two vertices are in the same orbit and, for the start of the iteration, one needs to be able to effectively compute with arithmetic subgroups of $\MG(K)$.

Constructively working with arithmetic subgroups in general is not easy. However, if $\MG(K \otimes_\MQ \MR)$ is compact, all arithmetic subgroups of $\MG(K)$ are finite and working with them becomes almost trivial (compared to the general situation). In this case the group $\MG$ clearly does not have the strong approximation property (with respect to $S=\emptyset$) so in general one has to deal with many orbits on edges and $2$-cells. However, in some rare situations the group $\UG(\CO_{K,\{\fp\}})$ acts transitively on the chambers of the Bruhat-Tits building (at $\fp$). If this is the case we are essentially in the same situation as if we had strong approximation and need only compute a reasonably small number of relations. These chamber-transitive groups were classified in \cite{KantorLieblerTits} and recently constructed using a different approach in \cite{KirschmerNebe}. 

We want to illustrate the general principle by giving an example in one of the exceptional groups.

\begin{Example}[\protect{\cite[Sect. 5.4]{KirschmerNebe}}]
 Let $\UG$ be the unique $\MZ$-form of $G_2$ such that $\UG(\MR)$ is compact and $\UG(\MZ_p)$ is a hyperspecial, maximal compact subgroup of $\UG(\MQ_p)$ for all primes $p$ (see \cite{GrossGroupsOverZ} for the construction). We consider the $S$-arithmetic group $G:=\UG(\MZ\left[\frac{1}{2}\right])$ (so $S=\{2\}$). It turns out (see for example \cite{KirschmerHabil}) that $G$ acts transitively on the chambers of the Bruhat-Tits building $\FB_2$ of $\UG(\MQ_2)$ and thus also on the vertices and edges of a given type in this building. The extended Dynkin-diagram of $G_2$ has the form 
\begin{center}
\begin{tikzpicture}

    \draw (2,0.07) -- (4,0.07);
    \draw (0,0) -- (4,0);
    \draw (2,-0.07) -- (4,-0.07);
    \draw (3.1,0) -- (2.9,0.2);
    \draw (3.1,0) -- (2.9,-0.2);
    \draw[fill=white] (0,0) circle(.1);
    \draw[fill=white] (2,0) circle(.1);
    \draw[fill=white] (4,0) circle(.1);
    
    \node at (-1,0) {$\tilde{G}_{2}:$};
    \node at (0,0.35) {$0$};
    \node at (2,0.35) {$1$};
    \node at (4,0.35) {$2$};

\end{tikzpicture} 
\end{center}
We choose vertices $v_0,v_1,v_2$ in $\FB_2$ of type $0,1$ and $2$, respectively, such that $\{v_0,v_1,v_2\}$ is a chamber. Since the group $G$ acts transitively on the edges (of a given type) it is already generated by the three stabilizers $\stab_G(v_i),~0\leq i \leq 2$. Since $G$ is also transitive on chambers there is only one $2$-cell one has to consider in the context of Brown's algorithm \ref{Presentation}, namely $C:=\{v_0,v_1,v_2\}$ which gives rise to the trivial relation. Thus $G$ is generated by the three groups
\begin{equation}
 \begin{split}
  \stab_G(v_0)&\cong G_2(2) \text{ of order } 2^6\cdot 3^3\cdot 7,\\
  \stab_G(v_1)&\cong 2_+^{1+4}.((C_3 \times C_3).2) \text{ of order } 2^6\cdot 3^2, \text{ and }\\
  \stab_G(v_2)&\cong 2^3.\GL_3(2) \text{ of order } 2^6\cdot 3\cdot 7\\
 \end{split}
\end{equation}
subject only to the relations arising from the fact that they intersect non-trivially. After some manual simplification we find that $G$ is generated by three elements $x_1,x_2,x_3$ of order $3$ which, under a suitable embedding $G_2 \hookrightarrow \SO_7$,  are given as

\begin{gather*}
 x_1=\frac{1}{2} \begin{pmatrix} -1&1&1 &-1&0&0&0\\
-1&0&0&1&0&1&1\\
-1&0 &-1&0&1& -1&0\\
 1&1&0&0&1&0&1\\
 0&0 &-1 &-1 &-1&0&1\\
 0 &-1&1&0&0 &-1&1\\
 0&1&0&1& -1& -1&0\end{pmatrix}, 
x_2 = \frac{1}{2} \begin{pmatrix} 0 & 1&1&0&0 &-1&1\\
 0&0 &-1& -1&1&0&1\\
-1&1 &-1&1&0&0&0\\
 1&0 &-1&0 &-1& -1&0\\
 1&1&0&0&1&0 &-1\\
 0&1&0 &-1 &-1&1&0\\
 1&0&0&1&0&1&1 \end{pmatrix},\\
x_3=\frac{1}{2}\begin{pmatrix}
      0 & 1&0&0&1&1& -1\\
-1&0&0&1&0&1&1\\
 1&0&0& -1&0&1&1\\
 1&1&0&1& -1&0&0\\
 0&0&2&0&0&0&0\\
 1 &-1&0&1&1&0&0\\
 0&1&0&0&1& -1&1
     \end{pmatrix}
\end{gather*}
 Each of these elements stabilizes one of the $1$-cells in the boundary of $C$ and transitively permutes the chambers containing this $1$-cell. 

 There are too many relations for it to make sense to print the full presentation here, so we provide a Magma readable version on the author's homepage instead:
{\normalfont
\begin{center}
 \url{www.math.rwth-aachen.de/homes/Sebastian.Schoennenbeck/S_unit_groups}
\end{center}
}
 The presentation can for instance be used in combination with Magma to verify that $G$ indeed has no normal subgroup of index at most $500,000$. 
\end{Example}
 
\section{Computational results}\label{ComputationalResults}
Due to the very limited usability of printed presentations and the fact that our algorithms generally yield rather long relations we refrain from actually printing the results here. Instead the results in a number of cases can be found on the author's homepage:

\begin{center}
 \url{www.math.rwth-aachen.de/homes/Sebastian.Schoennenbeck/S_unit_groups}
\end{center}

The available presentations include in particular those used in the following section to investigate the congruence subgroup property.
\section{The congruence subgroup property}\label{CSP}
We want to employ our algorithms for some experimental investigations regarding the congruence subgroup property. To that end we refer back to the notation of Section \ref{General Situation}. In particular, $\Lambda$ is the maximal order whose group of $S$-units we are interested in.
\begin{Definition}
 \begin{enumerate}
  \item For a two-sided ideal $I \triangleleft \Lambda_S$ we set $\Lambda_S^\times(I)$ the group of all elements of $\Lambda_S^\times$ that are congruent to the identity modulo $I$.
  \item The group $\Lambda_S^\times$ is said to have the congruence subgroup property if for every finite index subgroup $H \leq \Lambda_S^\times$ there is a two-sided ideal $I \triangleleft \Lambda_S$ such that $H$ contains $\Lambda_S^\times(I)$.
 \end{enumerate}
\begin{Remark}
 Set $\overline{S}:=S \cup \CV_\infty$ and let $\MG$ be the algebraic group (over $K$) arising from the norm-$1$-elements in $\MA^\times$. A conjecture of Serre (see \cite{PrasadRapinchuk}) states that $\Lambda_S^\times$ has the congruence subgroup property if
\begin{equation}
 \mathrm{rk_{\overline{S}}}\MG :=\sum_{v \in \overline{S}} \mathrm{rk_{k_v}}\MG \geq 2
\text{ and }\mathrm{rk_{k_\fp}} \MG >0\text{ for all }\fp \in S.
\end{equation} 
\end{Remark}
\end{Definition}

For an overview over the congruence subgroup property and related results we refer the interested reader to \cite{PrasadRapinchuk}.

We employ the strategy that was already used in \cite{Chinburgetal} for $S$-unit-groups in definite quaternion algebras and investigate the congruence subgroup property as follows. For certain instances of $\MA$ and $S$ we use our algorithm to compute a presentation for the projective $S$-unit group $\Lambda_S^\times/\CO_{K,S}^\times$. We then use Magma to compute all of its normal subgroups up to a modest index $n$ and check the composition factors that appear in the quotients. If $\Lambda_S^\times$ has the congruence subgroup property, the only non-Abelian simple groups that can appear here are of the form $\PSL_k(\MF_q)$ where $q$ is a power of $N(\fp)$ for some prime ideal $\fp$ of $\CO_{K,S}$ and
\begin{equation}
 \MA \otimes K_\fp \cong D_\fp^{ k \times k}.
\end{equation}

On the other hand for $\CP$ a prime ideal of $\Lambda_S$ we have $\Lambda_S^{\times}(\CP^2) \subset \Lambda_S^{\times}(\CP)$ and the quotient
\begin{equation}
 \Lambda_S^{\times}(\CP) / \Lambda_S^{\times}(\CP^2)
\end{equation}
 is a $p$-group, where $p\MZ = \CP \cap \MZ$. In particular, assuming the congruence subgroup property, the only constraints on the Abelian composition factors arise from the fact that the studied index has to be large enough for larger primes to appear. 
 
All of our computations support the congruence subgroup conjecture and we thus only tabulate the indices up to which we were able to compute all normal subgroups. We performed the computations for the following three algebras:

 Table \ref{D23} contains the indices for the algebra $\CD_{2,3}$, a degree $3$ algebra over $\MQ$ ramified at the primes $2$ and $3$. For all given instances of $S$ we only found Abelian composition factors of orders $2,3,7$ and $13$. These arise from the congruence subgroups corresponding to the ramified primes. In particular, note that $7 \mid 2^3-1$ and $13 \mid 3^3-1$. Table $\ref{Q57}$ contains the results for the rational quaternion algebra $Q_{5,7}$ ramified at $5$ and $7$. We find Abelian composition factors of orders $2,3,5$ and $7$ which all already arise at the ramified primes and non-Abelian composition factors isomorphic to $\PSL_2(q)$ with $q \in \{11,13,17,19\}$ (depending on $S$ and the index up to which we were able to compute) which arise from the congruence subgroups at the corresponding prime ideals of $\MZ$. Finally, Table $\ref{Quat-7}$ contains the results for the quaternion algebra $\left(\frac{-1,-1}{\MQ(\sqrt{-7})}\right)$. The two ideals that ramify in this algebra have norm $2$ and we obtain Abelian composition factors of order $2$ and $3$ as well as non-Abelian composition factors isomorphic to $\PSL_2(q)$ with $q \in \{7,9,11\}$ (depending on $S$ and the index) as one would expect.

\def \arraystretch{1.2}
\begin{table}[ht]
\begin{center}
 \begin{tabular}{l|l|l|l|l|l|l}

$S$ & $\{5\}$ &  $\{7\}$ & $\{11\}$& $\{5,7\}$ & $\{5,11\}$ & $\{5,7,11\}$ \\
\hline
$n$ & $10000$&  $10000$&  $2500$ &  $2500$&  $2500$&  $1000$
 \end{tabular}
\end{center}
\caption{Indices for computational check of the congruence subgroup property for $S$-unit groups in $\CD_{2,3}$.}\label{D23}
\end{table}

% \def \arraystretch{1.2}
% \begin{table}[ht]
% \begin{center}
%  \begin{tabular}{|l|l|l|c|}
% \hline
% $S$ & $n$ & Abelian comp. factors & Nonabelian comp. factors \\
% \hline
% $\{5\}$ & $10000$& $\MZ/d\MZ, ~d\in \{2,3,7,13\}$ &-\\
% \hline
% $\{7\}$ & $10000$& $\MZ/d\MZ, ~d\in \{2,3,7,13\}$ &-\\
% \hline
% $\{11\}$ & $2500$& $\MZ/d\MZ, ~d\in \{2,3,7,13\}$ &-\\
% \hline
% $\{5,7\}$ & $2500$& $\MZ/d\MZ, ~d\in \{2,3,7,13\}$ &-\\
% \hline
% $\{5,11\}$ & $2500$& $\MZ/d\MZ, ~d\in \{2,3,7,13\}$ &-\\
% \hline
% $\{5,7,11\}$ & $1000$& $\MZ/d\MZ, ~d\in \{2,3,7,13\}$ &-\\
% \hline
%  \end{tabular}
% \end{center}
% \caption{Composition factors of quotients of order at most $n$ of $S$-unit groups in $\CD_{2,3}$.}\label{D23}
% \end{table}

\def \arraystretch{1.2}
\begin{table}[ht]
\begin{center}
 \begin{tabular}{l|l|l|l|l|l|l}
$S$ &$\{2\}$ &$\{3\}$ &$\{11\}$ &$\{2,3\}$ & $\{2,11\}$& $\{2,3,11\}$\\
\hline
$n$ &$8000$ &$8000$ &$3000$ &$5000$ & $4000$& $3000$
 \end{tabular}
\end{center}
\caption{Indices for computational check of the congruence subgroup property for $S$-unit groups in $Q_{5,7}$.}\label{Q57}
\end{table}

% \def \arraystretch{1.2}
% \begin{table}[ht]
% \begin{center}
%  \begin{tabular}{|l|l|l|l|}
% \hline
% $S$ & $n$ & Abelian comp. factors & Nonabelian comp. factors \\
% \hline
% $\{2\}$ & $8000$& $\MZ/d\MZ, ~d\in \{2,3,5,7\}$ & $\PSL_2(q),~q \in \{11,13,17,19\}$\\
% \hline
% $\{3\}$ & $8000$& $\MZ/d\MZ, ~d\in \{2,3,5,7\}$ & $\PSL_2(q),~q \in \{11,13,17,19\}$\\
% \hline
% $\{11\}$ &$3000$&$\MZ/d\MZ, ~d\in \{2,3,5,7\}$&$\PSL_2(13)$\\
% \hline
% $\{2,3\}$ &$5000$&$\MZ/d\MZ, ~d\in \{2,3,5,7\}$&$\PSL_2(q),~q \in \{11,13,17\}$\\
% \hline
% $\{2,11\}$ &$4000$&$\MZ/d\MZ, ~d\in \{2,3,5,7\}$&$\PSL_2(13)$\\
% \hline
% $\{2,3,11\}$ &$3000$&$\MZ/d\MZ, ~d\in \{2,3,5,7\}$&$\PSL_2(13)$\\
% \hline
%  \end{tabular}
% \end{center}
% \caption{Composition factors of quotients of order at most $n$ of $S$-unit groups in $Q_{5,7}$.}\label{Q57}
% \end{table}

\def \arraystretch{1.2}
\begin{table}[H]
\begin{center}
 \begin{tabular}{l|l|l|l|l|l|l}

$S$ &$\{\omega\}$ &$\{3\}$ & $\{2\omega-3\}$& $\{\omega,3\}$&$\{\omega,2\omega-3\}$ &$\{\omega,3,2\omega-3\}$ \\
\hline
$n$ &$5000$ &$5000$ &$5000$ &$5000$ &$5000$ &$2500$
 \end{tabular}
\end{center}
\caption{Indices for computational check of the congruence subgroup property for $S$-unit groups in $\left(\frac{-1,-1}{\MQ(\omega)}\right)$ where $\omega^2 = -7$.}\label{Quat-7}
\end{table}

% \def \arraystretch{1.2}
% \begin{table}[ht]
% \begin{center}
%  \begin{tabular}{|l|l|l|l|}
% \hline
% $S$ & $n$ & Abelian comp. factors & Nonabelian comp. factors \\
% \hline
% $\{\sqrt{-7}\}$ &$5000$&$\MZ/d\MZ, ~d\in \{2,3\}$& $\PSL_2(q),~q \in \{9,11\}$\\
% \hline
% $\{3\}$ &$5000$&$\MZ/d\MZ, ~d\in \{2,3\}$& $\PSL_2(q),~q \in \{7,11\}$\\
% \hline
% $\{2\cdot\sqrt{-7}-3\}$ &$5000$&$\MZ/d\MZ, ~d\in \{2,3\}$& $\PSL_2(q),~q \in \{7,9,11\}$\\
% \hline
% $\{\sqrt{-7},3\}$ &$5000$&$\MZ/d\MZ, ~d\in \{2,3\}$& $\PSL_2(11)$\\
% \hline
% $\{\sqrt{-7},2\cdot\sqrt{-7}-3\}$ &$5000$&$\MZ/d\MZ, ~d\in \{2,3\}$& $\PSL_2(q),~q \in \{9,11\}$\\
% \hline
% $\{\sqrt{-7},3,2\cdot\sqrt{-7}-3\}$ &$2500$ &$\MZ/d\MZ, ~d\in \{2,3\}$& $\PSL_2(11)$\\
% \hline
%  \end{tabular}
% \end{center}
% \caption{Composition factors of quotients of order at most $n$ of $S$-unit groups in $\left(\frac{-1,-1}{\MQ(\sqrt{-7})}\right)$.}\label{Quat-7}
% \end{table}
\section*{Acknowledgements}
 The author would like to thank Professors Gabriele Nebe and Renaud Coulangeon for many helpful discussions as well as their comments and suggestions on earlier version of the article. 
%\acknowledgments{The author would like to thank Professors Gabriele Nebe and Renaud Coulangeon for many helpful discussions as well as their comments and suggestions on earlier version of the article.}

\bibliographystyle{abbrv}
\bibliography{S-units}

\end{document}